\documentclass{amsart}

\usepackage[utf8]{inputenc}
\usepackage[T1]{fontenc}
\usepackage{lmodern}
\usepackage{amsmath,amssymb}
\usepackage{tikz-cd}
\usepackage{hyperref}

\title[Fano manifolds with complete intersection type VMRT]{Fano Manifolds with Complete Intersection Type VMRT}
\author{Cenhao Li}
\address{School of Mathematics, Shandong University, Jinan, Shandong 250100, China}
\email{202311781@mail.sdu.edu.cn}
\subjclass[2020]{Primary 14J45; Secondary 14M17, 14E05}
\keywords{Fano manifolds, varieties of minimal rational tangents, complete intersections, equivariant compactifications, special birational transformations}

\newtheorem{lemma}{Lemma}[section]
\newtheorem{theorem}{Theorem}[section]
\newtheorem{corollary}{Corollary}[section]
\newtheorem{definition}{Definition}[section]
\newtheorem{proposition}{Proposition}[section]

\newtheorem*{maintheorem}{Main Theorem}

\begin{document}
\begin{abstract}
We study Fano manifolds of Picard number one equipped with a locally flat VMRT structure whose VMRT at a general point is a smooth linearly non-degenerate complete intersection. We prove that such a manifold is biregular to the standard hyperquadric provided that one of two additional assumptions is satisfied. The proof uses equivariant compactifications of the vector group $\mathbb C^n$, Euler-symmetric projective varieties, fundamental forms, and special birational transformations.
\end{abstract}
\maketitle

\section{Introduction}\label{26-1-2-691}
We work over the field of complex numbers. For a uniruled projective manifold $X$, we denote by $\operatorname{RatCurves}^n(X)$ the normalized space of rational curves on $X$ (for the construction and properties of the scheme structure on $\operatorname{RatCurves}^n(X)$, we refer to Chapter II of \cite{1}). An irreducible component $\mathcal{K}$ of $\operatorname{RatCurves}^n(X)$ is called a family of minimal rational curves if, for a general point $x\in X$, the subscheme $\mathcal{K}_x$ of $\mathcal{K}$ consisting of rational curves passing through $x$ is non-empty and projective. Let
\[
\rho:\mathcal{U}\to \mathcal{K},\qquad \mu:\mathcal{U}\to X
\]
be the universal family and its evaluation morphism. The tangent map
\[
\tau:\mathcal{U}\dashrightarrow \mathbb{P}(TX)
\]
is defined at a point $u\in \mathcal{U}$ satisfying the following condition: the curve $\mu(\rho^{-1}(\rho(u)))$ is smooth at $\mu(u)$. In this case, $\tau$ sends $u$ to the tangent direction of this curve at $\mu(u)$. Let $\mathcal{C}\subset \mathbb{P}(TX)$ be the closure of the image of $\tau$; it is called the VMRT structure associated with $\mathcal{K}$. For a general point $x\in X$, the fiber $\mathcal{C}_x\subset \mathbb{P}(T_xX)$ is called the VMRT at $x$ associated with the family $\mathcal{K}$.

The core philosophy of the VMRT theory, developed primarily by Hwang and Mok, is that in many cases the local projective geometric properties of the VMRT structure on a uniruled projective manifold $X$ can control the global algebraic structure of $X$. A central problem in this area is the characterization problem of uniruled projective manifolds (see Problem 1.3 in \cite{2}): Given a (well-known) uniruled projective manifold $S$, what algebro-geometric conditions must the VMRT at a general point of an unknown uniruled projective manifold $X$ satisfy to guarantee that $X$ is biregular to $S$?

When we focus on Fano manifolds $X$ of Picard number 1 ($b_2(X)=1$), this problem exhibits a profound connection with Cartan geometry, particularly with cone structures. The smooth hyperquadric $\mathbb{Q}^n\subset \mathbb{P}^{n+1}$ is one of the most fundamental models in VMRT theory. It is well known that the VMRT at a general point $x$ of $\mathbb{Q}^n$ is projectively isomorphic to $\mathbb{Q}^{n-2}\subset \mathbb{P}^{n-1}$. On the other hand, Hwang proved the following:

\begin{theorem}[{\cite[Theorem~1.11]{3}}]\label{1233}
If a Fano manifold $X$ of Picard number 1 ($n=\dim(X)\ge 3$) has the property that its VMRT at a general point is a smooth hypersurface, then $X$ must be biregular to $\mathbb{Q}^{n}$.
\end{theorem}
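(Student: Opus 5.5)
The plan is to follow the strategy of Hwang's original argument: first show that the VMRT structure is locally flat, then apply the Cartan--Fubini type extension theorem to get a biholomorphism onto an open subset of $\mathbb{Q}^n$, and finally globalize using Picard number one. Write $\mathcal{C}_x\subset\mathbb{P}(T_xX)$ for a smooth hypersurface at a general point $x$. Smoothness of $\mathcal{C}_x$ implies that a general minimal rational curve is standard, so $\dim \mathcal{C}_x = p$ with $-K_X\cdot C = p+2$. Since $\dim\mathcal{C}_x = n-2$, we get $p=n-2$ and $-K_X\cdot C = n$.

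The first step is to pin down the degree $d$ of the hypersurface $\mathcal{C}_x$. The aim is to show $d=2$. If $d=1$, then $\mathcal{C}_x$ is a hyperplane, which is linearly degenerate. But the VMRT of a Fano manifold of Picard number one is linearly non-degenerate: otherwise the linear spans would give a distribution whose integrability, via Hwang--Mok's results, would contradict $b_2=1$. So $d\ge2$. To rule out $d\ge3$, I would use the second fundamental form. For a standard curve $C$ through $x$, the tangent space of $\mathcal{C}_x$ at $[T_xC]$ is $P\cong T^+_{x}$ together with the splitting $T X|_C=\mathcal{O}(2)\oplus\mathcal{O}(1)^{n-2}\oplus\mathcal{O}$. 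I would then use the variation of the VMRT along $C$, or equivalently the fact that the projective second fundamental forms of $\mathcal{C}_x$ span constraints coming from the Frobenius bracket of the distribution $\widehat{\mathcal{C}}$ spanned along a curve. The target is the known result that the VMRT is a hypersurface of degree at most $2$ (Hwang's lemma, via the identity $\deg \mathcal{C}_x$ and the Euler sequence on the tautological lines). Concretely, the plan is to compute that the cubic form of the hypersurface must vanish, using the prolongation argument. If the cubic form vanishes identically at general points, the hypersurface is a quadric, and smoothness makes it the nonsingular quadric $\mathbb{Q}^{n-2}$.

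The second step is to prove that the VMRT structure $\mathcal{C}\to X$ is locally isomorphic to that of $\mathbb{Q}^n$, i.e.\ that it is a flat conformal structure. A family of nonsingular quadric cones amounts to a holomorphic conformal structure on a dense open set. Since $n\ge3$, I would show that the Weyl curvature (for $n\ge4$), respectively the Cotton tensor (for $n=3$), vanishes. The idea is to exploit that minimal rational curves are null geodesics whose tangents stay in the cone, and that they exist in an $(2n-3)$-dimensional family (which is maximal). The Weyl tensor restricted to null directions along these abundant rational curves must vanish, because the normal bundle splitting is forced to be $\mathcal{O}(1)^{n-2}\oplus\mathcal{O}$. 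This is the main obstacle. I would try Hwang's approach: use that every null geodesic through a general point is (an open part of) a minimal rational curve, since the dimensions match. Then the obstruction to lifting holomorphic rational curves, which is the Weyl curvature, vanishes.

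Once flatness is established, $X$ is locally conformally flat, so there is a local biholomorphism from an open set of $X$ to $\mathbb{Q}^n$ respecting VMRTs. By the Cartan--Fubini extension theorem of Hwang--Mok, valid because $b_2(X)=1$ and $\mathcal{C}_x$ is irreducible and non-linear, this extends to a biregular map $X\cong\mathbb{Q}^n$.
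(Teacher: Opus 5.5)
The paper gives no proof of this statement; it quotes it from Hwang. Your sketch therefore has to stand on its own, and it has a genuine gap at its central step: ruling out $d=\deg\mathcal{C}_x\ge 3$.

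You give no argument there, only a list of tools: variation of $\mathcal{C}_x$ along $C$, brackets of the span distribution, and a ``prolongation argument'' that should kill the Fubini cubic form. The ``known result'' that a codimension-one VMRT has degree at most $2$ is either false (without Picard number one, see below) or is essentially the statement you are trying to prove.

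In fact no argument built only from the local data you invoke can work. Take a smooth hypersurface $Z$ of degree $d\ge 2$ inside a hyperplane $H\subset\mathbb{P}^n$, let $X'=\operatorname{Bl}_Z\mathbb{P}^n$, and let $\mathcal{K}'$ be the family of strict transforms of lines meeting $Z$. For $x\notin H$:
\begin{itemize}
\item $\mathcal{K}'_x\cong Z$ is projective and $-K_{X'}\cdot C=(n+1)-1=n$;
\item general members are standard, with $TX'|_C=\mathcal{O}(2)\oplus\mathcal{O}(1)^{n-2}\oplus\mathcal{O}$;
\item under $\mathbb{P}(T_xX')\cong H$ the VMRT is exactly $Z$, a smooth, linearly non-degenerate hypersurface of arbitrary degree $d$. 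The structure is even locally flat on $\mathbb{P}^n\setminus H$.
\end{itemize}
The only difference from your setting is $b_2(X')=2$. In your Step 1, Picard number one enters only in the case $d=1$. So the degree bound must come from a genuinely global use of $b_2(X)=1$, and Step 1 as planned cannot be completed. Hwang's argument supplies such an input; a numerical characterization of $\mathbb{Q}^n$ via $-K_X\cdot C=n$ is another possible route.

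There are two secondary problems.

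For $d=1$, linear non-degeneracy of VMRTs of Fano manifolds of Picard number one is not a known theorem; it is a standard open question. What $b_2=1$ does give is that the span distribution cannot be integrable unless it is all of $TX$. So you still need to show that the hyperplane field is integrable, for instance via Araujo's structure theorem for linear VMRTs (the minimal curves are lines in the fibres of a $\mathbb{P}^{n-1}$-bundle over an open set).

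In Step 2 the heuristic is not an argument:
\begin{itemize}
\item Every holomorphic conformal structure has a $(2n-3)$-dimensional family of null geodesics, so their abundance imposes nothing by itself.
\item ``The Weyl tensor is the obstruction to lifting rational curves'' is not a precise statement.
\item For $n=3$ the relevant invariant is the Cotton tensor, which your reasoning never touches.
\end{itemize}
The actual argument (Hwang--Mok, Mok) uses the splitting type of $TX|_C$ along standard minimal rational curves to force the curvature to vanish. It needs the conformal structure to be defined on a neighbourhood of a general minimal rational curve, that is, control of the possibly divisorial locus where $\mathcal{C}_x$ fails to be a smooth quadric. It is cleaner to cite the recognition theorem for $\mathbb{Q}^n$ from its VMRT directly.

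Your Step 3 (Cartan--Fubini extension) is fine. The paper only ever applies this statement when $\mathcal{C}_x$ is already a smooth quadric, which is the part your Steps 2--3 address.
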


Hwang's theorem solves the recognition problem for quadratic hypersurfaces in the case where the VMRT at a general point has codimension $1$. Beyond hypersurfaces, smooth complete intersections form one of the most natural classes of projective models. In fact, the phenomenon that the VMRT at a general point is a complete intersection is not rare: for example, when $X$ itself is a Fano complete intersection, its VMRT at a general point is again a complete intersection (see \cite[Example~1.4.2]{4}). This shows that a pointwise complete-intersection condition alone is not sufficient to impose effective structural restrictions. Therefore, this paper focuses instead on the existence of isotrivial VMRT structures of complete intersection type.

To make this precise, we recall two basic concepts. For a fixed projective variety $Z\subset \mathbb{P}^{n-1}$, if over a connected open subset of a uniruled projective manifold $X$ ($n=\dim(X)$), the VMRT $\mathcal{C}_x$ at a general point is projectively equivalent to $Z$, then the VMRT structure is said to be $Z$-isotrivial. Furthermore, if there exist an analytic open subset $U$ of $X$ and an open immersion $\varphi:U\to \mathbb{C}^n$ such that the restricted VMRT structure $\mathcal{C}|_U$ is equivalent to the trivial subbundle $\varphi(U)\times Z$ of $\mathbb{P} T(\varphi(U)) = \varphi(U) \times \mathbb{P}^{n-1}$, then the VMRT structure is said to be $Z$-locally flat.

Blowing up projective space along a linearly degenerate smooth closed subvariety $Z$ can produce a $Z$-locally flat VMRT structure (see \cite[Example~6.11]{7}). Thus, this paper turns to the existence of isotrivial VMRT structures of complete intersection type on Fano manifolds of Picard number $1$. Fu--Hwang proved that, for many smooth complete intersections $Z$, a $Z$-isotrivial VMRT structure is necessarily locally flat. More precisely, they proved the following result:

\begin{theorem}[{\cite[Theorem~1.7]{6}}]\label{26-1-2-660}
Let $Z\subset \mathbb{P}^{n-1}$ be a smooth linearly non-degenerate complete intersection.
Let us denote its multi-degree by $(m_1,\cdots,m_c)$, where $c$ is the codimension of $Z$. Assume further that $Z$ satisfies one of the following:

(i) $Z$ is a curve of multi-degree different from $(3)$, $(4)$, $(2,2)$, $(2,3)$ or $(2,2,2)$;

(ii) $Z\subset \mathbb{P}(V)$ is covered by lines, with multi-degree different from $(2)$, $(3)$, or $(2,2)$;

(iii) $Z$ is contained in a smooth hypersurface of degree $d \ge 3$ and its multi-degree $(m_1,\cdots,m_c)$ satisfies $d = m_1 < d + 2 \le m_2 \le\cdots\le m_c$.

Then any $Z$-isotrivial VMRT-structure on a uniruled projective manifold is locally flat.
\end{theorem}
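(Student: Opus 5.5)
This is Fu--Hwang's result, quoted from \cite{6}, so here we only outline the strategy we would use to reprove it. The standard route to local flatness of a $Z$-isotrivial cone structure is Cartan--Tanaka theory. A $Z$-isotrivial VMRT structure is a $G_0$-structure, where $G_0=\widehat{\operatorname{Aut}}(Z)\subset GL(V)$ is the linear automorphism group of the cone $\widehat Z\subset V$. The obstructions to flatness are the intrinsic torsions of this $G_0$-structure. They take values in Spencer cohomology groups $H^{k,2}(\mathfrak g_0)$, built from the prolongations $\mathfrak g_0^{(k)}$ of $\mathfrak g_0=\operatorname{aut}(\widehat Z)$.

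The plan has three steps.

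\emph{Step 1: the prolongation vanishes.} We would show $\mathfrak g_0^{(1)}=0$, via the result of Hwang--Mok and Fu--Hwang that prolongations vanish for linearly non-degenerate smooth $Z$ other than the known exceptional list; for a complete intersection of degree $\ge 3$ or non-quadric type this holds. Then the $G_0$-structure is of finite type and flatness reduces to the vanishing of the first-order torsion.

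\emph{Step 2: computing $\mathfrak g_0$.} For a complete intersection we have $\mathfrak{aut}(\widehat Z)=\mathbb C\cdot\mathrm{id}\oplus \mathfrak{aut}(Z)$. For most multidegrees, $\mathfrak{aut}(Z)$ vanishes or is small, which follows from the vanishing of $H^0(Z,T_Z)$ for complete intersections outside the quadric and low-degree cases. In that case the torsion lies in $\operatorname{Hom}(\wedge^2V,V)$ modulo $\partial(\operatorname{Hom}(V,\mathfrak g_0))$.

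\emph{Step 3: the torsion vanishes.} This is the key geometric input. The torsion of the VMRT structure must preserve the VMRT in a weak sense. By Hwang--Mok's analysis of minimal rational curves (standard curves are tangent to the structure), the torsion $T:\wedge^2V\to V$ satisfies $T(\alpha,\beta)\in \widehat T_\alpha \widehat Z$ for $\beta\in \widehat T_\alpha\widehat Z$, and similar incidence conditions. The hypotheses (i)--(iii) then enter case by case:
\begin{itemize}
\item in (i) one uses the rigidity of canonical curves and the geometry of curves of higher degree;
\item in (ii) the lines on $Z$ force $T(\alpha,\beta)$ to lie in the span of the line, and non-degeneracy propagates this to $T=0$;
\item in (iii) the degree gap $m_2\ge d+2$ makes the second fundamental form of $Z$ essentially the single hypersurface of degree $d$, and a degree count on $H^0(Z,\mathcal O(k))$ kills the torsion components.
\end{itemize}

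The main obstacle is Step 3, namely showing that these incidence conditions force the torsion to vanish. We would attack it through the cohomology of the twisted tangent and normal bundles of $Z$, using the Koszul resolution of the complete intersection to compute the relevant $H^0$ groups in each case.
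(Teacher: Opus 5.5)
The paper gives no proof of this statement. It is quoted from Fu--Hwang and serves only as motivation, since the Main Theorem assumes local flatness outright. Your outline therefore has to stand on its own, and as written it does not: it is a programme whose decisive step is missing, and two of its reductions are wrong as stated.

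\textbf{Step 1.} The principle that, once $\mathfrak g_0^{(1)}=0$, flatness reduces to the vanishing of the first-order torsion is false in general. Finite type still leaves a curvature obstruction with values in $\operatorname{Hom}(\wedge^2V,\mathfrak g_0)$: Riemannian structures have $\mathfrak{so}(n)^{(1)}=0$ and zero intrinsic torsion, yet are rarely flat. The principle does hold in the case you actually need, $\mathfrak g_0=\mathbb C\cdot\mathrm{id}$ with $n\ge 3$. A torsion-free connection with values in $\mathbb C\cdot\mathrm{id}$ gives $d\theta^i=\omega\wedge\theta^i$ for an adapted coframe. Hence $d\omega\wedge\theta^i=0$ for all $i$, which forces $d\omega=0$; then $\omega=df$ locally and the forms $e^{-f}\theta^i$ are closed. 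So you must establish $\mathfrak{aut}(Z)=0$, not merely that it is ``small''; this is true for smooth non-degenerate complete intersections other than quadrics and elliptic curves.

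\textbf{The conic in (i).} Case (i) as stated admits the smooth conic, of multi-degree $(2)$ with $n=3$. There $\mathfrak g_0=\mathfrak{co}(3)$ has nonzero prolongation, the intrinsic torsion of every conformal structure vanishes (Levi-Civita), and the obstruction to flatness is the Cotton tensor. Your Step 1 explicitly excludes quadric type and Step 3 carries no information there, so this case is not covered.

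\textbf{Step 3.} Most seriously, Step 3 is the entire content of the theorem and is only gestured at. Grant the incidence condition you quote. Every $T(\alpha,\beta)=\theta(\alpha)\beta-\theta(\beta)\alpha$ with $\theta\in V^*$ satisfies it, so the target is $T\in\partial(V^*)$, not $T=0$. The condition says that the map $\beta\mapsto T(\alpha,\beta)$ modulo $\widehat T_\alpha\widehat Z$, defined on $\widehat T_\alpha\widehat Z/\mathbb C\alpha$ and linear in $\alpha$, vanishes. Equivalently, the induced section of $\Omega_Z\otimes N_Z(1)$ is zero. If this is your only input, you must prove, case by case via the Koszul resolution, that the kernel of $\wedge^2V^*\otimes V\to H^0(Z,\Omega_Z\otimes N_Z(1))$ is exactly $\partial(V^*)$. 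The argument should show precisely where the excluded multi-degrees fail. Your bullets do neither:
\begin{itemize}
\item In (i), the excluded curves are precisely those with $K_Z\cong\mathcal O_Z$ or $\mathcal O_Z(1)$, namely elliptic normal curves and canonical complete intersections. Appealing to ``rigidity of canonical curves'' does not explain this.
\item In (ii), along a general line $\ell\subset Z$ the condition only yields $T(\alpha,\beta)\in\bigcap_{\gamma\in\ell}\widehat T_\gamma\widehat Z$. This space has dimension $p+2$, where $p=n-2-\sum m_i$ is the dimension of the family of lines of $Z$ through a general point; it is not the $2$-plane spanned by $\ell$. Further argument is needed to pin $T$ down.
\item In (iii), the second fundamental form of $Z$ is spanned by the restricted Hessians of all $c$ defining equations. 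The degree gap does not by itself reduce it to a single hypersurface.
\end{itemize}
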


Therefore, locally flat VMRT structures of complete intersection type form a natural case in the study of $Z$-isotrivial VMRT structures. The main result of this paper shows that, under some additional assumptions, such structures do not exist except for the VMRT structure of a hyperquadric.

\begin{maintheorem}\label{26-1-2-661}
Let $X$ be a Fano manifold of Picard number 1 with dimension $n\ge 3$. Let $D$ denote the ample generator of $\operatorname{Pic}(X)$, and let $m$ be the smallest positive integer such that $mD$ is very ample. Given the following data:

(i) $Z\subsetneq \mathbb{P}^{n-1}$ is a fixed linearly non-degenerate smooth complete intersection.

(ii) a family of minimal rational curves $\mathcal{K}$ on $X$, such that the associated VMRT structure $\mathcal{C}$ is $Z$-locally flat.

If one of the following conditions is satisfied:

(a) $H^0(\mathbb{P}^{n-1},\mathcal{I}_Z(m))=0$.

(b) $\dim(Z)\ge \dim (X)-4$.

Then $X$ is biregular to the standard hyperquadric $\mathbb{Q}^n$.
\end{maintheorem}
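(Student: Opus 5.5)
The plan is to turn the local flatness into a global vector-group action and then read off the geometry from the equations of $Z$. First, by the Cartan--Fubini type extension theorem of Hwang--Mok combined with the results of Fu--Hwang on $Z$-locally flat cone structures, a Fano manifold of Picard number one whose VMRT structure is locally flat with $Z$ smooth and linearly non-degenerate is an equivariant compactification of the vector group $\mathbb{C}^n$. Moreover, $X$ embedded by $|mD|$ (more generally any linearly normal $\mathbb{G}_a^n$-equivariant embedding) is an Euler-symmetric projective variety. So I first fix the $\mathbb{G}_a^n$-action with open orbit $\mathbb{C}^n\ni 0$. By local flatness, the VMRT at the origin is $Z\subset\mathbb{P}(T_0X)$, and the translation-invariant cone structure is $\mathbb{C}^n\times Z$. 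Using the Fu--Hwang correspondence between Euler-symmetric varieties and symbol systems, the embedding $X\subset\mathbb{P}H^0(X,mD)^*$ is then described by a system of fundamental forms $F^1,\dots,F^r$ at $0$. Since lines through $0$ lie in $X$ exactly along directions where all $F^k$ ($k\ge2$) vanish, and minimal rational curves are lines when $D\cdot C=1$ (to be checked from $-K_X\cdot C=p+2$ and the Picard number one assumption), the base locus of the second fundamental form and its prolongations cut out the cone over $Z$ up to degree $m$.

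Under \textbf{(a)}, I would argue that every fundamental form of degree $k\le m$ restricted to $Z$ vanishes. Since the lines through $0$ in the $|mD|$-embedding are the $\mathcal{K}$-curves of degree $m$, the $F^k$ vanishing on $Z$ yields nonzero sections of $\mathcal{I}_Z(k)$. If $H^0(\mathcal{I}_Z(m))=0$, then (after multiplying by linear forms, using non-degeneracy) the $F^k$ with $2\le k\le m$ are forced to vanish. The Euler-symmetric structure then makes $X$ a linear space, giving $X\cong\mathbb{P}^n$. But $\mathbb{P}^n$ has VMRT equal to all of $\mathbb{P}^{n-1}$, which contradicts $Z\subsetneq\mathbb{P}^{n-1}$. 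So this case must in fact reduce to $Z$ being a quadric, and then Theorem~\ref{1233} gives $\mathbb{Q}^n$. Care is needed here: the honest conclusion is that the second fundamental form at a general point has base locus exactly $Z$, forcing the ideal of $Z$ to be generated in degree $\le 2$ by the Fu--Hwang recovery of $Z$ from the symbol system.

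Under \textbf{(b)}, $\operatorname{codim} Z\le 3$. Consider the rational map $\mathbb{P}^n\dashrightarrow X$ inverse to the projection from a general point, i.e. the special birational transformation given by the Euler-symmetric system. Its base locus is $Z$ in the hyperplane at infinity, so it is a quadro-type special birational map of $\mathbb{P}^n$ with smooth connected base locus. The classification of Ein--Shepherd-Barron and Ionescu--Russo of special Cremona/quadro-quadric transformations in low codimension ($\operatorname{codim}\le 3$, smooth complete intersection base locus) then shows that $Z$ must be a hyperquadric, or else $Z$ is degenerate or not a complete intersection. A complete intersection of codimension $2$ or $3$ cannot be a Severi variety or a base locus of such maps because of the secant defect/Hartshorne constraints. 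Hence $Z$ is a quadric hypersurface, and Theorem~\ref{1233} finishes the argument.

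The main obstacle is the middle step: rigorously identifying $Z$ with the base locus of the relevant fundamental forms, i.e.\ showing that the ideal of $Z$ in degree $\le m$ is exactly what the Euler-symmetric embedding sees. I would try first to use the Fu--Hwang theorem that the VMRT of an Euler-symmetric variety is the base locus of its second fundamental form $F^2$, and then apply the vanishing in (a) or the classification in (b) directly to $F^2$.
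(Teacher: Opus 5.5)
There is a genuine gap, and it starts with a misidentification that then propagates. In the embedding $X\subset\mathbb{P}H^0(X,mD)^*$ a curve $l\in\mathcal{K}$ has $D\cdot l=1$. The paper proves this using the closure $H$ of a linear hyperplane of $\mathbb{C}^n$; it does not follow from $-K_X\cdot l=p+2$ and $\rho=1$ alone. So $l$ has degree $m$: it is the rational normal curve $\overline{\phi_{\mathbf F}(\mathbb{C}w)}$, and it is a line only when $m=1$.

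Consequently, for $[w]\in Z$ the forms in $F^k_x$ do not all vanish at $w$ for any $k\le m$; they all vanish only for $k\ge m+1$. In fact $\mathbf{Bs}(F^m_x)=\emptyset$, and $Z$ is an irreducible component of $\mathbf{Bs}(F^{m+1}_x)$. Your key step in (a), that the forms of degree $k\le m$ vanish on $Z$ and so give sections of $\mathcal{I}_Z(k)$, is therefore backwards. The chain built on it ($F^k=0$, hence $X\cong\mathbb{P}^n$, hence ``$Z$ must be a quadric'') has no valid content.

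Your fallback ``$\mathbf{Bs}(F^2_x)=Z$'' presupposes $m=1$, since for $m\ge 2$ that base locus is empty. Even granting $m=1$, knowing that $I(Z)$ is generated by quadrics does not make $Z$ a quadric. A non-degenerate complete intersection of two quadrics is not excluded by anything you say.

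The missing argument for (a) is the paper's chain:
\begin{itemize}
\item Since $F^{m+1}_x\subset I(Z)_{m+1}$ and $Z$ is a component of $\mathbf{Bs}(F^{m+1}_x)$, we get $\dim F^{m+1}_x\ge c=\operatorname{codim}Z$.
\item Together with $I(Z)_m=0$, this forces all $c$ generators into degree $m+1$, so $I(Z)=(F^{m+1}_x)$.
\item Cartan's prolongation, plus the lemma that $f\in I(Z)^2$ whenever all $\partial f/\partial z_i\in I(Z)$, gives $F^{m+2}_x=0$. So the rank is $m+1$.
\item Hence $\Phi_{\mathbf F}$ is a special birational transformation of type $(m+1,1)$, whose base locus $Z$ lies in the hyperplane at infinity.
\item Linear degeneracy of the base locus forces type $(2,1)$. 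The strict transform of that hyperplane is the unique exceptional divisor of the resolving morphism, and for type $(a,b)$ this yields $ab-1=b$.
\item The Fu--Hwang classification of type $(2,1)$ transformations then leaves only the quadric, and Theorem~\ref{1233} gives $\mathbb{Q}^n$.
\end{itemize}

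For (b), your claims that the base locus of $\Phi_{\mathbf F}$ is $Z$ and that the map is ``quadro-type'' are unsupported. The base scheme of $\Phi_{\mathbf F}$ is the scheme-theoretic base locus of $F^r_x$, where $r$ is the rank. Without hypothesis (a), nothing controls $r$. Nothing prevents $\mathbf{Bs}(F^r_x)$ from having further components (other minimal families of $D$-degree one) or non-reduced structure either. That control is exactly what $I(Z)_m=0$ supplied. Moreover, the target is $X$, not $\mathbb{P}^n$, so the Ein--Shepherd-Barron and Ionescu--Russo classifications of Cremona or quadro-quadric maps do not apply as stated. Appeals to secant defects or Hartshorne-type constraints are not an argument.

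What hypothesis (b) actually gives is the index: $i(X)=-K_X\cdot l=\dim Z+2\ge n-2$. The paper feeds this into the Fu--Montero classification of Picard-number-one equivariant compactifications of $\mathbb{C}^n$ with index at least $n-2$. It eliminates the linear sections of $\mathbb{S}_5$ using Proposition~\ref{310}: finiteness of $\operatorname{Aut}(Z)$ for a non-quadric complete intersection of dimension at least $3$, with Lemma~\ref{324}, would force $\dim\operatorname{Aut}(X)=n+1$, contradicting their known automorphism groups. It eliminates the remaining homogeneous models and the $G(2,5)$-sections by their known VMRTs. Your proposal never uses the index, so case (b) is essentially unaddressed.
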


To prove the Main Theorem, our core idea is to construct a bridge extending from differential geometry to the classification theorem of quadratically symmetric varieties. According to a classical theorem by Fu and Hwang (\cite[Proposition~6.13]{7}), a Fano manifold $X$ of Picard number 1 with a locally flat VMRT structure, whose general VMRT is smooth and linearly non-degenerate, is necessarily an equivariant compactification of the complex vector group $\mathbb{C}^n$. Furthermore, according to the theory of Fu and Hwang, $X$ is actually an Euler symmetry variety under the embedding $X\subset \mathbb{P} H^0(X,mD)^*$ (\cite[Theorem~5.5]{8}). 

Since the geometric structure of an Euler symmetry variety is completely governed by its fundamental forms at a general point, we incorporate the assumption that ``the VMRT at a general point is a complete intersection'' into the theoretical framework of Euler symmetry varieties. Using the classical theory of fundamental forms founded by Cartan (\cite[Theorem~3.3]{8}), we will prove that the algebraic restrictions of the complete intersection will force the higher-order ranks of the symbol systems to vanish entirely, thereby deducing that $X$ is a quadratically symmetric variety. Since quadratically symmetric prime Fano manifolds have been completely classified (\cite[Theorem~7.8]{9}), we determine that $X$ is isomorphic to the hyperquadric. 

Condition (a) will be used to determine that the rational map $\mathbb{P}^n\dashrightarrow \mathbb{P} H^0(X,mD)^*$ given by \[ \mathbb{C}^n\subset X\subset \mathbb{P} H^0(X,mD)^* \] is actually a special birational transformation of type $(m+1,1)$ (see Definition~\ref{26-1-2-680}); then Proposition~\ref{prop:linearly-degenerate-base-locus} reduces it to type $(2,1)$, and the Fu--Hwang classification of such transformations forces the VMRT to be a hyperquadric.

Condition (b) forces the Fano index into the range \(i(X)\ge n-2\). Hence one may apply the Fu--Montero classification theorem for equivariant compactifications of vector groups with high index \cite[Theorem~1.1]{14} to exclude all candidate models other than the hyperquadric.

\subsection*{Standing assumptions}
In the next three sections, unless otherwise specified, we fix $X$ to be an $n$-dimensional Fano manifold of Picard number 1, and $\mathcal{K}$ to be a family of minimal rational curves on it. We assume that the associated VMRT structure is locally flat, and for a general point $x\in X,\ \mathcal{C}_x\subset \mathbb{P}(T_xX)$ is a non-singular irreducible linearly non-degenerate projective subvariety strictly contained in $\mathbb{P}(T_xX)$.

\section{The \texorpdfstring{$\mathbb{C}^n\rtimes \mathbb{C}^\times$}{Cn rtimes Cstar}-action on \texorpdfstring{$X$}{X}}\label{26-1-2-689}
As mentioned in the introduction, the local flatness of the VMRT structure $\mathcal{C}$ means that in an analytic neighborhood of a general point, $\mathcal{C}$ is equivalent to a trivial cone structure on an open subset $U\subset \mathbb{C}^n$ with coordinates $z_1,\ldots,z_n$. This local flatness guarantees that the holomorphic vector fields $\frac{\partial}{\partial z_1},\cdots,\frac{\partial}{\partial z_n}$, together with the Euler vector field
\[
E=z_1\frac{\partial}{\partial z_1}+\cdots+z_n\frac{\partial}{\partial z_n},
\]
preserve the VMRT structure $\mathcal{C}$ on $U$. According to the Cartan-Fubini type extension theorem developed by Hwang and Mok (\cite[Main Theorem]{11}), these holomorphic vector fields, which locally preserve the VMRT structure, can be uniquely analytically continued to the entire manifold $X$.

\begin{proposition}\label{26-1-2-662}
There exists an algebraic $\mathbb{C}^n\rtimes \mathbb{C}^\times$-action on $X$ satisfying:

1. The restriction of this action to $\mathbb{C}^n$ endows $X$ with the structure of an equivariant compactification of $\mathbb{C}^n$. That is, this $\mathbb{C}^n$-action has an open orbit $X^o$ (since $\mathbb{C}^n$ has no non-trivial finite subgroups, this open orbit is isomorphic to $\mathbb{C}^n$).

2. Under the aforementioned isomorphism $X^o\cong \mathbb{C}^n$, there is a commutative diagram:
\[\begin{tikzcd}
	{(\mathbb{C}^n\rtimes \mathbb{C}^\times)\times X} && X \\
	{(\mathbb{C}^n\rtimes \mathbb{C}^\times)\times \mathbb{C}^n} && {\mathbb{C}^n}
	\arrow[from=1-1, to=1-3]
	\arrow[hook, from=2-1, to=1-1]
	\arrow["{((v,\alpha),x)\mapsto v+\alpha\cdot x}"', from=2-1, to=2-3]
	\arrow[hook, from=2-3, to=1-3]
\end{tikzcd}\]
\end{proposition}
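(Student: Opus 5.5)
We first extend the local symmetries to global vector fields, then integrate them to an algebraic group action, and finally identify the open orbit together with the induced action on it.

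\emph{Step 1: global vector fields.} On the flat chart $U\subset \mathbb{C}^n$ the span
\[
\mathfrak{g}=\operatorname{span}\Bigl\{\tfrac{\partial}{\partial z_1},\ldots,\tfrac{\partial}{\partial z_n},E\Bigr\}
\]
is a Lie algebra. Its brackets are $[\partial_i,\partial_j]=0$ and $[E,\partial_i]=-\partial_i$, so $\mathfrak{g}$ is isomorphic to the Lie algebra of $\mathbb{C}^n\rtimes\mathbb{C}^\times$, with $\mathbb{C}^\times$ acting on $\mathbb{C}^n$ by scalars. Each of these fields preserves $\mathcal{C}|_U$. By the Cartan--Fubini type extension theorem of Hwang--Mok \cite[Main Theorem]{11} (applicable because $\mathcal{C}_x$ is smooth, irreducible and non-degenerate, and $b_2(X)=1$), each field extends uniquely to a holomorphic vector field on $X$. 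Uniqueness of analytic continuation shows that the extension $\mathfrak{g}\to H^0(X,TX)$ is an injective Lie algebra homomorphism.

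\emph{Step 2: integration to an algebraic action.} Since $X$ is projective, $\operatorname{Aut}^0(X)$ is a connected linear algebraic group whose Lie algebra is $H^0(X,TX)$. We take the connected subgroup $G\subset \operatorname{Aut}^0(X)$ with Lie algebra $\mathfrak{g}$. The main obstacle is to show that $G$ is algebraic and isomorphic to $\mathbb{C}^n\rtimes\mathbb{C}^\times$, rather than a non-closed analytic subgroup or a finite cover or quotient of it. We handle this through the Jordan decomposition in $\operatorname{Aut}^0(X)$. The abelian ideal $\mathfrak{n}=\langle\partial_i\rangle$ is the derived algebra $[\mathfrak{g},\mathfrak{g}]$, and $\operatorname{ad}E$ acts on it with eigenvalue $-1$. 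Hence $\mathfrak{n}$ consists of nilpotent elements, so its exponential $N$ is a closed unipotent subgroup isomorphic to $\mathbb{C}^n$.

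For the $\mathbb{C}^\times$-factor, the flow of $E$ on $U$ is $z\mapsto e^t z$, which is periodic in $t$ with period $2\pi i$. Periodicity together with the uniqueness of continuation gives $\exp(2\pi i E)=\mathrm{id}$ on $X$. Thus the one-parameter subgroup of $E$ factors through $\mathbb{C}^\times$, and a homomorphism $\mathbb{C}^\times\to\operatorname{Aut}^0(X)$ is automatically algebraic, because its image is a torus. Combining the two subgroups gives an algebraic homomorphism $\mathbb{C}^n\rtimes\mathbb{C}^\times\to \operatorname{Aut}^0(X)$. This homomorphism is injective: its kernel meets $\mathbb{C}^n$ trivially, since $\mathfrak{n}$ acts faithfully, and the action on $U$ shows that the scalar part acts faithfully as well.

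\emph{Step 3: the open orbit and compatibility.} Fix a point $x_0\in U$ with coordinate $0$; by translating coordinates we may assume $0\in U$. The fields $\partial_i$ span $T_{x_0}X$, so the $N$-orbit $X^o$ of $x_0$ is open. Because $N$ is unipotent, the stabilizer of $x_0$ is a connected unipotent subgroup, and it has dimension $0$, so it is trivial. Hence the orbit map $v\mapsto v\cdot x_0$ is an isomorphism $\mathbb{C}^n\cong X^o$. Near $x_0$ this map agrees with $v\mapsto v$ in the $z$-coordinates, since both are governed by the flows of the $\partial_i$. In these coordinates $\alpha\in\mathbb{C}^\times$ acts by $x\mapsto\alpha x$, fixing $x_0$ and satisfying $\alpha v\alpha^{-1}=\alpha v$. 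Therefore on $X^o$ we have
\[
(v,\alpha)\cdot(w\cdot x_0)=v\cdot\alpha\cdot w\cdot x_0=(v+\alpha w)\cdot x_0,
\]
which is the asserted commutative diagram.
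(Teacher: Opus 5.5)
Your argument is correct, but it is more self-contained than the paper's proof, which works purely by citation. The paper quotes the equivariant compactification from Fu--Hwang \cite[Proposition~6.13]{7} and the extension of the scalar $\mathbb{C}^\times$-action from \cite[Proposition~5.4(iii)]{8}. The Cartan--Fubini extension of $\partial/\partial z_i$ and $E$ appears there only as motivation at the start of Section~\ref{26-1-2-689}. You instead start from that extension and do the integration yourself: the $\operatorname{ad}E$-eigenvalue shows $\mathfrak{n}$ is nilpotent, giving a closed unipotent $N\cong\mathbb{C}^n$ with trivial stabilizer at $x_0$ and hence $X^o\cong\mathbb{C}^n$, and the periodicity of the Euler flow gives an algebraic $\mathbb{C}^\times$. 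This explains why the group is algebraic and is exactly $\mathbb{C}^n\rtimes\mathbb{C}^\times$. It also supplies the conjugation rule behind the diagram, which the paper's ``combining these two facts'' leaves implicit. The citation route is shorter and absorbs some details you should state more carefully:
\begin{enumerate}
\item $\operatorname{Aut}^0(X)$ is linear because $X$ is Fano (e.g.\ via the anticanonical linearization), not merely projective, and linearity is what gives ``nilpotent'' its meaning.
\item $\exp(2\pi i E)=\mathrm{id}$ needs the chart to be a ball centred at $x_0$, so that the loops $s\mapsto e^{2\pi i s}z$ stay in $U$.
\item $\mathbb{C}^\times\to\operatorname{Aut}^0(X)$ is algebraic because $\exp(2\pi i E)=1$ forces $E$ to be semisimple with integer eigenvalues in a faithful representation; that the image is a torus is the consequence, not the reason.
\item For injectivity it suffices that the normal kernel meets $\mathbb{C}^n$ trivially, since the centralizer of $\mathbb{C}^n$ in $\mathbb{C}^n\rtimes\mathbb{C}^\times$ is $\mathbb{C}^n$. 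Your appeal to the action on $U$ does not by itself handle mixed elements $(v,\alpha)$.
\end{enumerate}
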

\begin{proof}
The equivariant compactification structure follows from \cite[Proposition~6.13]{7}, while the extension of the scalar $\mathbb{C}^\times$-action follows from \cite[Proposition~5.4(iii)]{8}. Combining these two facts gives the $(\mathbb{C}^n\rtimes \mathbb{C}^\times)$-action.
\end{proof}

Regarding the origin $0\in \mathbb{C}^n\cong X^o$ as a distinguished point $x\in X$, we emphasize that the restriction of the aforementioned action to $\mathbb{C}^\times$ is induced by the Euler vector field, and thus it preserves the projective tangent space $\mathbb{P}(T_xX)$ at $x$. We note that this $\mathbb{C}^\times$-action provides a one-parameter family of deformations for any minimal rational curve $l$ in $\mathcal{K}_x$. For a general point $x\in X$, Kebekus' theorem implies that the tangent map
from the normalization of $\mathcal{K}_x$ to $\mathcal{C}_x$ is finite (see \cite[Theorem~3.4]{19}). Hence this one-parameter deformation family is trivial, and therefore $l$ is the closure in $X$ of a one-dimensional linear subspace of $\mathbb{C}^n\cong X^o$. Translating the point $x$ by the action of $\mathbb{C}^n$ on $X$, we obtain

\begin{proposition}\label{26-1-2-663}
For any $x\in X^o$ and any $l\in \mathcal{K}_x$, there exists a $\mathbb{C}^\times$-action on $X$ such that $l$ is the closure of some orbit of this action.
\end{proposition}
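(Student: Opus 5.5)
We first treat the base point $x$ corresponding to $0\in\mathbb{C}^n\cong X^o$, and then move to an arbitrary point by translation. Restrict the $\mathbb{C}^n\rtimes\mathbb{C}^\times$-action of Proposition~\ref{26-1-2-662} to the subgroup $\{0\}\rtimes\mathbb{C}^\times$. By the commutative diagram, this acts on $X^o\cong\mathbb{C}^n$ by scalar multiplication $y\mapsto \alpha y$. It fixes $x=0$, and its nonconstant orbits in $X^o$ are the punctured lines $\mathbb{C}^\times\cdot v$ with $v\neq 0$. So it is enough to show that every $l\in\mathcal{K}_x$ equals the closure $\overline{\mathbb{C}\cdot v}$ of a line through the origin, and then transport this statement.

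\textbf{Step 1: the case of $0$.} Let $l\in\mathcal{K}_x$. The $\mathbb{C}^\times$-action fixes $x$ and preserves the family $\mathcal{K}$, because it is induced by the Euler field, which preserves $\mathcal{C}$ and hence, via the Cartan--Fubini extension, acts on $\mathcal{K}$. It therefore induces an action on $\mathcal{K}_x$, and $\alpha\mapsto \alpha\cdot l$ is a one-parameter family in $\mathcal{K}_x$. The differential of scalar multiplication at $0$ is scalar multiplication on $T_xX$, which acts trivially on $\mathbb{P}(T_xX)$. Hence every $\alpha\cdot l$ has the same tangent direction $[v]\in\mathcal{C}_x$ at $x$.

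We may take $x$ general, which is allowed after translating: $X^o$ is a single $\mathbb{C}^n$-orbit, so every point of $X^o$ is general with respect to the translation-invariant VMRT data. Kebekus' theorem \cite[Theorem~3.4]{19} then says the tangent map from the normalization of $\mathcal{K}_x$ to $\mathcal{C}_x$ is finite. The orbit $\{\alpha\cdot l\}$ is irreducible and lies in a fiber of this finite map, so it is a single point. Thus $\alpha\cdot l=l$ for all $\alpha$, and $l$ is $\mathbb{C}^\times$-stable.

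Now $l\cap X^o$ is a nonempty $\mathbb{C}^\times$-stable irreducible curve through $0$. Pick $y\in l\cap X^o$ with $y\neq0$. Its orbit $\mathbb{C}^\times y$ is a punctured line contained in $l$, so $l=\overline{\mathbb{C}^\times y}$ is the closure of an orbit.

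\textbf{Step 2: transport to an arbitrary point.} For general $x'\in X^o$, write $x'=t_w(x)$ with $t_w$ the translation by $w\in\mathbb{C}^n$. The translation $t_w$ is an automorphism preserving $\mathcal{K}$, so $t_w^{-1}(l')\in\mathcal{K}_x$ for any $l'\in\mathcal{K}_{x'}$. By Step 1, $t_w^{-1}(l')$ is the closure of an orbit of the scalar $\mathbb{C}^\times$. Conjugating this $\mathbb{C}^\times$ by $t_w$ gives the subgroup $\{((1-\alpha)w,\alpha)\}\subset\mathbb{C}^n\rtimes\mathbb{C}^\times$, which acts by $y\mapsto w+\alpha(y-w)$. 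Then $l'$ is the closure of an orbit of this conjugate action.

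\textbf{Main obstacle.} The delicate point is invoking Kebekus' finiteness at every point of $X^o$ rather than only at a general point, and checking that the $\mathbb{C}^\times$-action genuinely acts on $\mathcal{K}_x$. I would settle both by homogeneity. The $\mathbb{C}^n$-action preserves $\mathcal{K}$, since the translation fields preserve $\mathcal{C}$ and extend by Cartan--Fubini. Consequently the set of points where the tangent map is finite is a nonempty $\mathbb{C}^n$-invariant subset of $X^o$, hence all of $X^o$.
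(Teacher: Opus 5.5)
Your proposal is correct and follows the paper's argument. The scalar $\mathbb{C}^\times$ fixes $x$ and acts trivially on $\mathbb{P}(T_xX)$, so Kebekus' finiteness of the tangent map forces the one-parameter family $\alpha\cdot l$ to be constant, making $l$ the closure of a line through the origin. Translation by $\mathbb{C}^n$ then handles the remaining points of $X^o$; your homogeneity argument for applying Kebekus at every point of $X^o$ and the explicit conjugate subgroup $\{((1-\alpha)w,\alpha)\}$ spell out steps the paper leaves implicit.
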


The argument above is the application of the method of \cite[Proposition~5.4(iv)]{8} to the present setting.

We can actually compute the intersection number of a minimal rational curve $l \in \mathcal{K}$ with the generator of $\operatorname{Pic}(X)$. Take an arbitrary $(n-1)$-dimensional subspace $\mathbb{C}^{n-1} \subset \mathbb{C}^n \subset X$, and let $H$ be its closure in $X$. Since $\mathcal{C}_x$ is non-degenerate, for a general $l\in \mathcal{K}$, the intersection $l\cap X^o$ and $H\cap X^o=\mathbb{C}^{n-1}$ intersect transversally at exactly one point. Let $B:=X\backslash X^o$ be the boundary divisor. Since $B\cap H$ is a subvariety of $X$ of codimension 2, by the deformation theory of minimal rational curves (see, for example, \cite[Lemma 1.1(3)]{11}), for a general minimal rational curve $l\in \mathcal{K}$, the intersection $l\cap B\cap H$ is empty. This gives the following conclusion; its proof is the same as the corresponding argument in \cite[Proposition~5.4(v)]{8}.

\begin{proposition}\label{26-1-2-664}
For any $l\in \mathcal{K},\ l\cdot H=1$, and $H$ is the generator of $\operatorname{Pic}(X)$.
\end{proposition}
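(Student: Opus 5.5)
The plan is to compute $l\cdot H$ directly for a general $l\in\mathcal{K}$, and then use $\rho(X)=1$ to deduce that $H$ generates $\operatorname{Pic}(X)$.

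First, fix a general $l\in\mathcal{K}$. By Proposition~\ref{26-1-2-663}, after translating by the $\mathbb{C}^n$-action we may assume that $l\cap X^o$ is an affine line $\{p+tv\}\subset\mathbb{C}^n$. Since $\mathcal{C}_x$ is linearly non-degenerate, the direction $[v]$ is general in $\mathcal{C}_x$ and therefore does not lie in the hyperplane $\mathbb{P}(\mathbb{C}^{n-1})$. Hence $l\cap X^o$ meets $H\cap X^o=\mathbb{C}^{n-1}$ in exactly one point, and the intersection there is transversal; this point contributes $1$ to $l\cdot H$.

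Second, we rule out intersection points on the boundary $B=X\setminus X^o$. Because $B\cap H$ has codimension $2$ in $X$, the cited deformation result shows that a general member of $\mathcal{K}$ avoids $B\cap H$. Hence $l\cap H=l\cap H\cap X^o$ is a single transversal point. Since $H$ is an effective Cartier divisor ($X$ is smooth) and $l\not\subset H$, we get $l\cdot H=1$. By deformation invariance this holds for every $l\in\mathcal{K}$.

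Third, write $H\equiv aD$ with $a\ge 1$, using $\operatorname{Pic}(X)\cong\mathbb{Z}D$ (Fano with Picard number one, so there is no torsion) and the fact that $H$ is effective and nonzero. Then $1=l\cdot H=a\,(l\cdot D)$, and $l\cdot D\ge 1$ because $D$ is ample. Therefore $a=1$ and $H=D$ is the generator.

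The main obstacle is the boundary-avoidance step. One has to check that the codimension-two statement of \cite[Lemma 1.1(3)]{11} applies to $B\cap H$. This holds because general members of $\mathcal{K}$ are free and so avoid any fixed subset of codimension at least two. One also has to confirm that $B\cap H$ is genuinely of codimension $2$, i.e.\ that $H\not\subset B$; this is clear, since $H$ contains the dense subset $\mathbb{C}^{n-1}\subset X^o$.
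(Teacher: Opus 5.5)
Your proof is correct and follows essentially the same route as the paper. Linear non-degeneracy of $\mathcal{C}_x$ gives a single transversal intersection of $l\cap X^o$ with $\mathbb{C}^{n-1}$, a general member of $\mathcal{K}$ avoids the codimension-two set $B\cap H$, and $l\cdot H=1$ then forces $H$ to be primitive in $\operatorname{Pic}(X)\cong\mathbb{Z}D$. Your extra details (that $H\not\subset B$ gives $\operatorname{codim}(B\cap H)\ge 2$, and the computation $1=a\,(l\cdot D)$) just make explicit what the paper defers to \cite[Proposition~5.4(v)]{8}.
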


We say that a Fano manifold $\widetilde{X}$ of Picard number 1 is a prime Fano manifold if the ample generator $D$ of $\operatorname{Pic}(\widetilde{X})$ is very ample, and $\widetilde{X}$ is covered by lines under the embedding $\widetilde{X}\subset \mathbb{P} H^0(\widetilde{X},D)^*$. The proposition above shows that for the given $X$, when the ample generator of $\operatorname{Pic}(X)$ is very ample, $X$ is a prime Fano manifold.

\begin{lemma}[{\cite[Theorem~3.1]{18}}]\label{323}
Let $Z\subset \mathbb{P}^n$ be a linearly non-degenerate smooth complete intersection of positive codimension. If $Z$ is not projectively isomorphic to the hyperquadric and $\dim(Z)\ge 3$, then the automorphism group $\operatorname{Aut}(Z)$ is finite.
\end{lemma}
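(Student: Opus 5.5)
The statement is Benedetti's theorem (\cite[Theorem~3.1]{18}), which the paper cites as a black box. If I were to reprove it, I would argue through infinitesimal automorphisms and the Jacobian ring, along classical lines.

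\emph{Step 1: reduce to the connected component.} $Z$ is a smooth complete intersection with $\dim Z\ge 3$. By Grothendieck--Lefschetz, $\operatorname{Pic}(Z)=\mathbb{Z}\cdot\mathcal{O}_Z(1)$, so every automorphism preserves $\mathcal{O}_Z(1)$. Since $Z$ is linearly normal ($H^1(\mathcal{I}_Z(1))=0$ by the Koszul resolution), every automorphism extends to a projective linear map of $\mathbb{P}^n$ preserving $Z$. Hence $\operatorname{Aut}(Z)$ is a closed algebraic subgroup of $PGL_{n+1}$. It therefore has finitely many components, and it suffices to prove $H^0(Z,T_Z)=0$.

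\emph{Step 2: compute $H^0(T_Z)$.} Let $N=\bigoplus_i\mathcal{O}_Z(m_i)$ be the normal bundle. There are two exact sequences:
\[
0\to T_Z\to T_{\mathbb{P}^n}|_Z\to N\to 0,
\qquad
0\to\mathcal{O}_Z\to\mathcal{O}_Z(1)^{n+1}\to T_{\mathbb{P}^n}|_Z\to 0 .
\]
Using $H^1(\mathcal{O}_Z)=0$ (here $\dim Z\ge 2$) and projective normality, $H^0(T_{\mathbb{P}^n}|_Z)=\mathfrak{gl}_{n+1}/\mathbb{C}$. It follows that $H^0(T_Z)$ is the space of linear vector fields $A$ with $A\cdot f_i\in (f_1,\dots,f_c)$ for all $i$, modulo scalars, that is, the Lie algebra of the stabilizer of $Z$.

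The main task is to show that such an $A$ must be trivial unless $Z$ is a quadric. I would first try an ordering argument. Arrange $m_1\le\dots\le m_c$, with all $m_i\ge2$ by non-degeneracy. We may modify the $f_i$ (adding multiples of lower-degree generators) so that $A\cdot f_i=\sum_j g_{ij}f_j$ with $\deg g_{ij}=m_i-m_j$. Consider the generic member of the top-degree block: a hypersurface $F$ of degree $m_c$ in the ideal, chosen generically so that it is smooth. A result of Matsumura--Monsky type says a smooth hypersurface of degree $\ge3$ has no infinitesimal linear automorphisms. One cannot, however, directly force $A$ to preserve a single smooth $F$; instead I would use the induced action of the torus/unipotent parts. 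By Jordan decomposition, a nonzero $A$ has either a nontrivial semisimple part or a nontrivial nilpotent part inside the Lie algebra of the stabilizer. Treat the two cases separately.

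\textbf{Semisimple case.} A $\mathbb{C}^\times$ acts on $Z$ with weights on the coordinates, and the $f_i$ can be chosen as weight vectors. A fixed-point component analysis then gives the contradiction. Smoothness of $Z$ at the sink/source forces some $f_i$ to contain a monomial $x_0^{m_i-1}x_j$ to be linear in normal directions. This constrains the weights to a pattern that, for degrees $\ge 3$ and $\dim Z\ge3$, is incompatible with the dimension count of the tangent space, except when everything is quadratic with a single quadric.

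\textbf{Nilpotent case.} This reduces to the semisimple case via the normalizer and a Borel fixed-point argument: the stabilizer group contains a $\mathbb{G}_a$, and its fixed locus on $Z$ must be singular-free. I expect this nilpotent case, and the analogous difficulty for several quadrics, to be the hardest part.

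For the case of all $m_i=2$ with $c\ge2$, I would use the simultaneous diagonalization of a smooth pencil/net of quadrics. Stabilizers of smooth intersections of two quadrics are finite extensions of $(\mathbb{Z}/2)^{n}$, and for $c\ge2$ the argument is similar via discriminant loci.

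Realistically I would defer to \cite{18} for the delicate weight bookkeeping.
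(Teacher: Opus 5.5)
Your Steps 1 and 2 are correct and standard. Grothendieck--Lefschetz and projective normality make $\operatorname{Aut}(Z)$ the stabilizer of $Z$ in $PGL_{n+1}$. So the lemma is equivalent to the claim that every $A\in\mathfrak{gl}_{n+1}$ with $A\cdot I(Z)\subset I(Z)$ is a scalar. That claim is the entire content of \cite[Theorem~3.1]{18}, which the paper only cites, and your sketch does not establish it.

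The concrete failure is the nilpotent case, which you yourself flag as hardest.
\begin{itemize}
\item If $\operatorname{Aut}(Z)^0$ is unipotent, it contains no nontrivial torus, so there is nothing semisimple to reduce to.
\item The Jacobson--Morozov torus $s^{H}$ normalizes $\exp(\mathbb{C}A)$ but does not preserve $Z$. The flat limit $\lim_{s\to0}s^{H}\cdot Z$, which both groups preserve, is in general singular and need not be a complete intersection, so no smoothness argument applies to it.
\item There is no principle that the fixed locus of a $\mathbb{G}_a$-action on a smooth variety is smooth. That holds for linearly reductive groups such as $\mathbb{G}_m$, not for $\mathbb{G}_a$.
\end{itemize}
Likewise, for $c\ge 3$ quadrics, ``similar via discriminant loci'' is not an argument.

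The semisimple case can be finished, but by comparing extremal weights rather than by a tangent-space dimension count. Suppose $\mathbb{G}_m\subset\operatorname{Aut}(Z)$ has weights $w_{\min}<w_{\max}$ and the $f_i$ are weight vectors.
\begin{itemize}
\item Non-degeneracy puts the limits as $t\to0$ and $t\to\infty$ of a general point of $Z$ in $Z\cap\mathbb{P}(V_{w_{\min}})$ and $Z\cap\mathbb{P}(V_{w_{\max}})$.
\item The Jacobian has rank $c$ at these points. Hence, in an eigenbasis containing both, \emph{every} $f_i$ contains monomials $x_p^{m_i-1}x_j$ and $x_q^{m_i-1}x_{j'}$, where $e_p,e_q$ have weights $w_{\min},w_{\max}$.
\item Comparing the two expressions for the weight of $f_i$ gives $(m_i-1)(w_{\max}-w_{\min})=w_j-w_{j'}\le w_{\max}-w_{\min}$, which excludes $m_i\ge 3$.
\item If all $m_i=2$ (so $c\ge2$, since $Z$ is not a quadric), equality forces every $f_i$ to have weight $w_{\min}+w_{\max}$. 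The torus then acts trivially on $\mathbb{P}(I(Z)_2)$ and fixes each member of a general pencil in $I(Z)_2$.
\item The base locus of such a pencil is a smooth intersection of two quadrics. Simultaneous diagonalization shows its pointwise stabilizer is finite modulo scalars, a contradiction.
\end{itemize}

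So the genuinely missing piece is the case where $\operatorname{Aut}(Z)^0$ is unipotent. For hypersurfaces the Matsumura--Monsky argument handles every $A$ at once. If $\sum_k\ell_k\partial_kf=\lambda f$ with the $\ell_k$ linear, then $\sum_k(\ell_k-\tfrac{\lambda}{d}x_k)\partial_kf=0$ is a syzygy with linear coefficients of the regular sequence $\partial_0f,\dots,\partial_nf$ of degree $d-1\ge2$, so it vanishes. For $c\ge2$ the $c(n+1)$ entries of the Jacobian matrix cannot form a regular sequence, so this does not transfer verbatim; supplying a replacement is exactly what the cited theorem does. Deferring to \cite{18} is consistent with the paper, but your proposal as written is a plan, not a proof.
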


\begin{lemma}[{\cite[Lemma~5.6]{3}}]\label{324}
Let $Z\subset \mathbb{P}^{n-1}$ be a linearly non-degenerate non-singular projective subvariety with $\mathfrak{aut}(Z)=0$, where $\mathfrak{aut}(Z)$ denotes the Lie algebra of the automorphism group $\operatorname{Aut}(Z)$. For the flat $Z$-isotrivial cone structure $\mathbb{C}^n\times Z\subset \mathbb{P} T\mathbb{C}^n$ on $\mathbb{C}^n$, the Lie algebra of germs of holomorphic vector fields at $0\in \mathbb{C}^n$ preserving the cone structure is $\mathrm{Lie}(\mathbb{C}^n\rtimes \mathbb{C}^\times)=\mathbb{C}^n\rtimes \mathbb{C}$.
\end{lemma}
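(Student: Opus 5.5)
Lemma~\ref{324} is a result of Hwang quoted from \cite[Lemma~5.6]{3}. I take the task to be sketching how one would prove it directly: the Lie algebra $\mathfrak g$ of germs of holomorphic vector fields at $0\in\mathbb{C}^n$ preserving the flat cone structure $\mathbb{C}^n\times Z$ should equal the span of the translations $\partial/\partial z_i$ and the Euler field $E$. Both of these clearly preserve the structure, so the content is the reverse inclusion.

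The plan is to grade $\mathfrak g$ by polynomial degree. Take a germ $v=\sum_i f_i\,\partial/\partial z_i$ in $\mathfrak g$ and expand it as $v=\sum_{k\ge 0}v_k$, where the coefficients of $v_k$ are homogeneous polynomials of degree $k$. The Euler field $E$ lies in $\mathfrak g$, and $[E,v_k]=(k-1)v_k$. Since the cone structure is invariant under the flow of $E$, which is the scalar action, each homogeneous component $v_k$ also lies in $\mathfrak g$. This can be seen by averaging over the $\mathbb{C}^\times$-action or by comparing Taylor coefficients. So it suffices to determine the graded pieces $\mathfrak g_k$, the part of $\mathfrak g$ in polynomial degree $k$.

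\begin{itemize}
\item \textbf{Degrees $0$ and $1$.} We have $\mathfrak g_0=\mathbb{C}^n$. A linear field $v_1=Az$ preserves $\mathbb{C}^n\times Z$ exactly when $A\in\mathfrak{gl}(n)$ preserves the cone $\hat Z$, that is, when the image of $A$ in $\mathfrak{pgl}(n)$ lies in $\mathfrak{aut}(\hat Z\subset \mathbb{P}^{n-1})$. Linear non-degeneracy of $Z$ makes projective automorphisms of $Z$ embed in $\operatorname{Aut}(Z)$, so $\mathfrak{aut}(Z)=0$ forces $A$ to be scalar. Hence $\mathfrak g_1=\mathbb{C}E$.

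\item \textbf{Degree $k\ge2$.} Bracketing with the translations lowers the degree by one: $[\partial/\partial z_i,v_k]\in\mathfrak g_{k-1}$. For $k=2$, write $v_2=\sum_{i,j,l} c^l_{ij}\,z_iz_j\,\partial/\partial z_l$ with $c^l_{ij}$ symmetric in $i,j$. Then $[\partial/\partial z_i,v_2]$ lies in $\mathfrak g_1=\mathbb{C}E$, so $[\partial/\partial z_i,v_2]=2\lambda_i E$ for some linear form $\lambda$. This gives $c^l_{ij}=\lambda_i\delta_{jl}$, and symmetry in $i,j$ then forces $\lambda_i\delta_{jl}=\lambda_j\delta_{il}$ for all $i,j,l$. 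Taking $n\ge2$, $i=l\ne j$ yields $\lambda_j=0$ for every $j$, so $v_2=0$.

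Alternatively, even without the symmetry argument, the candidate $v_2=\lambda(z)E$ can be ruled out directly: such fields generate the projective group action, whose second-order term acts on the cone structure through the second fundamental form. These fields preserve $\mathbb{C}^n\times Z$ only when $Z$ is a quadric or all of $\mathbb{P}^{n-1}$, and here $\mathfrak{aut}(Z)=0$ excludes the quadric. The symmetry computation already gives $\mathfrak g_2=0$, though.

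\item \textbf{Induction.} Once $\mathfrak g_2=0$, every $v_3\in\mathfrak g_3$ has all its brackets $[\partial/\partial z_i,v_3]=0$, so $v_3$ has constant coefficients. That is impossible in degree $3$, so $v_3=0$. By induction, $\mathfrak g_k=0$ for all $k\ge2$.
\end{itemize}

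Combining the graded pieces gives $\mathfrak g=\mathbb{C}^n\rtimes\mathbb{C}$, as claimed.

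The main obstacle is the degree-$1$ identification: one must verify carefully that the condition for a linear field $Az$ to preserve the trivial cone structure is exactly that $A$ is tangent to the cone over $Z$. One must also check that such tangency implies that the induced field on $Z$ is zero only when $A$ is scalar, which uses linear non-degeneracy of $Z$.
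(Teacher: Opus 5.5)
Your proof is correct. The paper gives no argument of its own for this lemma; it only cites Hwang. What you wrote is the standard Cartan-prolongation proof. A germ $v$ preserves the flat structure iff $Dv(z)\in\mathfrak{aut}(\hat Z)$ for all $z$ near $0$, which is a linear condition that splits by degree. Hence $\mathfrak g=\mathbb{C}^n\oplus\mathfrak{aut}(\hat Z)\oplus\mathfrak{aut}(\hat Z)^{(1)}\oplus\cdots$, and this also settles the degree-one check you flagged as the main obstacle. Non-degeneracy of the irreducible $Z$ together with $\mathfrak{aut}(Z)=0$ gives $\mathfrak{aut}(\hat Z)=\mathbb{C}\cdot\mathrm{id}$. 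Your symmetry computation is exactly the vanishing of the first prolongation of the homotheties for $n\ge 2$, after which all higher pieces vanish.

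One correction to your optional aside, which you should drop. For $\lambda\neq 0$, the Jacobian of $\lambda(z)E$ is $w\mapsto\lambda(z)w+\lambda(w)z$. This is tangent to $\hat Z$ at every $w\in\hat Z$ for all $z$ only when $Z=\mathbb{P}^{n-1}$. So such fields do not preserve the flat quadric structure either; the quadric's quadratic symmetries have the form $2q(a,z)z-q(z,z)a$. The main argument does not need this remark.
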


\begin{proposition}\label{310}
Suppose that $X$ is not isomorphic to the hyperquadric. If for a general point $x\in X$, the VMRT $\mathcal{C}_x$ is a linearly non-degenerate smooth complete intersection and $\dim(\mathcal{C}_x)\ge 3$, then the Lie algebra $\mathfrak{aut}(X)$ is isomorphic to $\mathbb{C}^n\rtimes \mathbb{C}$.
\end{proposition}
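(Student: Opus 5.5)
The plan is to combine Lemma~\ref{323} with Lemma~\ref{324}, and then use the Cartan--Fubini extension theorem to pass from germs of vector fields at a general point to global vector fields on $X$.

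\emph{Step 1: the VMRT has no infinitesimal automorphisms.} Let $x\in X^o$ be a general point and set $Z=\mathcal{C}_x$. Then $Z$ is a linearly non-degenerate smooth complete intersection with $\dim Z\ge 3$. We first show that $Z$ is not projectively a hyperquadric. If it were, the VMRT at a general point would be a smooth hypersurface, and since $n\ge 3$, Theorem~\ref{1233} would force $X\cong\mathbb{Q}^n$, contradicting the hypothesis. Lemma~\ref{323} then shows that $\operatorname{Aut}(Z)$ is finite, so $\mathfrak{aut}(Z)=0$.

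\emph{Step 2: germs preserving the cone structure.} The VMRT structure is $Z$-locally flat. Hence on an analytic neighborhood $U$ of $x$ it is identified with the flat cone structure $\varphi(U)\times Z\subset\mathbb{P}T\mathbb{C}^n$, with $x$ sent to $0$. By Lemma~\ref{324}, the Lie algebra of germs at $x$ of holomorphic vector fields preserving $\mathcal{C}$ is exactly $\mathbb{C}^n\rtimes\mathbb{C}$. It is spanned by $\partial/\partial z_i$ and $E$.

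\emph{Step 3: comparing with $\mathfrak{aut}(X)$.} Consider the restriction map from $\mathfrak{aut}(X)=H^0(X,TX)$ to germs at $x$.

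First, every global vector field preserves $\mathcal{C}$. The identity component $\operatorname{Aut}^0(X)$ is connected and acts on the irreducible component $\mathcal{K}$ of $\operatorname{RatCurves}^n(X)$, so it maps $\mathcal{K}$ to itself. It therefore preserves the VMRT structure $\mathcal{C}$, and so does its Lie algebra.

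Second, the restriction map is injective, because a holomorphic vector field on the connected manifold $X$ that vanishes near $x$ vanishes identically. So $\mathfrak{aut}(X)$ embeds into $\mathbb{C}^n\rtimes\mathbb{C}$.

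Third, the restriction map is surjective. The fields $\partial/\partial z_i$ and $E$ extend to global holomorphic vector fields by the Cartan--Fubini type extension of Hwang--Mok \cite{11}. Equivalently, they are the infinitesimal generators of the algebraic $\mathbb{C}^n\rtimes\mathbb{C}^\times$-action constructed in Proposition~\ref{26-1-2-662}. Their Lie brackets agree with those of the germs, so the restriction map is an isomorphism of Lie algebras onto $\mathbb{C}^n\rtimes\mathbb{C}$.

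\emph{Main obstacle.} I expect the delicate point to be the claim in Step 3 that every global vector field preserves the VMRT structure, together with making sure the identification in Step 2 is carried out at the same general point $x$ used for Lemma~\ref{324}. For the first, I will argue via the connectedness of $\operatorname{Aut}^0(X)$ acting on the components of $\operatorname{RatCurves}^n(X)$, as explained above. For the second, I will take $x$ general in $X^o$, so that both the local flatness chart and Lemma~\ref{324} apply at $x$.
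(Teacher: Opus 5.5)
Your proposal is correct and follows the paper's argument. Theorem~\ref{1233} rules out the quadric, Lemma~\ref{323} gives finiteness of $\operatorname{Aut}(\mathcal{C}_x)$, and Lemma~\ref{324} identifies the local symmetry algebra with $\mathbb{C}^n\rtimes\mathbb{C}$. Your Step~3 spells out what the paper compresses into its final sentence: global vector fields preserve $\mathcal{C}$ because $\operatorname{Aut}^0(X)$ fixes $\mathcal{K}$, restriction to germs is injective, and the image contains the generators of the $\mathbb{C}^n\rtimes\mathbb{C}^\times$-action.
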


\begin{proof}
By Theorem \ref{1233}, $\mathcal{C}_x$ is not projectively isomorphic to $\mathbb{Q}^{n-2}$. Consequently, by Lemma \ref{323}, the automorphism group $\operatorname{Aut}(\mathcal{C}_x)$ is finite. Thus, it follows from Lemma \ref{324} that the Lie algebra of local holomorphic vector fields preserving the VMRT structure is $\mathbb{C}^n\rtimes(\mathbb{C}\cdot E)$, where $E$ denotes the Euler vector field. This implies that the Lie algebra homomorphism $\mathbb{C}^n\rtimes \mathbb{C} \to \mathfrak{aut}(X)$ induced by the $\mathbb{C}^n\rtimes \mathbb{C}^\times$-action on $X$ is an isomorphism.
\end{proof}

Moreover, Fu--Hwang used the Cartan--Fubini type extension theorem \cite[Main Theorem]{11} to prove the following uniqueness result for equivariant compactifications:

\begin{theorem}[{\cite[Theorem~1.2]{12}}]\label{26-1-2-665}
Up to isomorphism, the equivariant compactification structure on $X$ is unique.
\end{theorem}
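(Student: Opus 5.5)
The statement to prove is Theorem~\ref{26-1-2-665}: the equivariant compactification structure on $X$ is unique up to isomorphism. Concretely, suppose $\sigma_1,\sigma_2:\mathbb{C}^n\times X\to X$ are two actions, each with an open orbit. We want an automorphism $g\in\operatorname{Aut}(X)$ intertwining them up to an automorphism of the group $\mathbb{C}^n$. The plan is to realize both actions as subgroups of the same algebraic group and then compare their Lie algebras.

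First we use that $\operatorname{Aut}(X)$ is a linear algebraic group, since $X$ is Fano. Its identity component $G=\operatorname{Aut}^0(X)$ has Lie algebra $\mathfrak{aut}(X)$. Each $\sigma_i$ is an algebraic action of a connected group, so it gives a homomorphism $\mathbb{C}^n\to G$. This homomorphism is injective because the orbit map to the open orbit is an isomorphism. So each $\sigma_i$ determines an $n$-dimensional abelian unipotent subalgebra $\mathfrak{a}_i\subset\mathfrak{aut}(X)$ whose group $A_i$ has an open orbit. It then suffices to show that $A_1$ and $A_2$ are conjugate in $G$: if $A_2=gA_1g^{-1}$, the map $x\mapsto g\cdot x$ intertwines the two actions after composing with the group isomorphism $A_1\cong A_2$ given by conjugation.

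To compare the $\mathfrak{a}_i$ we use the VMRT. Near a general point $x$ of the open orbit of $A_i$, the translations in $A_i$ preserve $\mathcal{K}$, hence preserve the VMRT structure $\mathcal{C}$. This gives local coordinates in which $\mathfrak{a}_i$ consists of the vector fields $\partial/\partial z_j$. By the Cartan--Fubini extension \cite[Main Theorem]{11}, every local vector field preserving $\mathcal{C}$ extends to $X$. So $\mathfrak{aut}(X)$ is identified with the Lie algebra $\mathfrak{g}$ of germs at $x$ preserving the cone structure, and $\mathfrak{g}$ is a graded Lie algebra $\bigoplus_{k\ge -1}\mathfrak{g}_k$ (the prolongation) with $\mathfrak{g}_{-1}\cong T_x X$. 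The key claim is that $\mathfrak{a}_i$ is exactly the set of elements of $\mathfrak{g}$ whose value is nonzero at every point of the open orbit. Equivalently, any abelian subalgebra acting simply transitively near $x$ is conjugate, via the flow of an element of $\mathfrak{g}_{\ge 0}$, to the standard $\mathfrak{g}_{-1}$.

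I expect this claim to be the main obstacle: an abelian subalgebra $\mathfrak{a}_2$ transversal to $\mathfrak{g}_{\ge 0}$ need not a priori equal a conjugate of $\mathfrak{g}_{-1}$. I would first work inside the second structure's flat chart, where $\mathfrak{a}_2$ is $\{\partial/\partial w_j\}$. Since both $A_1$ and $A_2$ are commuting vector groups with a common open orbit, we can choose a point $x$ in both open orbits, which is possible because open orbits are dense. At $x$, both $\mathfrak{a}_i$ evaluate isomorphically onto $T_xX$. Then I would use the $\mathbb{C}^\times$-grading element $E_1$ attached to $A_1$, furnished by Proposition~\ref{26-1-2-662}. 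If $E_1$ normalizes $\mathfrak{a}_2$, then the fact that $\mathfrak{a}_2$ is abelian and transversal forces $\mathfrak{a}_2=\exp(\operatorname{ad}\xi)\mathfrak{a}_1$ for some $\xi\in\mathfrak{g}_0\oplus\mathfrak{g}_1$. This follows by solving degree by degree, using that $[\mathfrak{g}_{-1},\mathfrak{g}_1]\subset\mathfrak{g}_0$ and that $\mathfrak{g}_1$ acts on $\mathfrak{g}_{-1}$ faithfully. If $E_1$ does not normalize $\mathfrak{a}_2$, I would fall back on a Borel-type argument: a maximal connected solvable subgroup containing both $A_1$ and $E_1$ can be conjugated to contain $A_2$, and the decomposition into unipotent radical and torus then identifies the two vector groups.
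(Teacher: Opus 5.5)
\textbf{Verdict: genuine gap.} Your reduction to showing that $A_1$ and $A_2$ are conjugate is fine. But the step that carries all the content is not proved: that an arbitrary second vector group $\mathfrak{a}_2$ with an open orbit is conjugate to $\mathfrak{a}_1=\mathfrak{g}_{-1}$. Neither of your two routes can prove it.

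\emph{The normalizing case.} When $E_1$ normalizes $\mathfrak{a}_2$ the conclusion is immediate, with no degree-by-degree argument. Then $\mathfrak{a}_2$ is $\operatorname{ad}E_1$-graded, $\mathfrak{g}_{\ge 0}$ vanishes at $x\in O_2$, and evaluation $\mathfrak{a}_2\to T_xX$ is injective, so $\mathfrak{a}_2=\mathfrak{g}_{-1}$ outright.

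\emph{The generic case.} The Borel argument only places $A_1$ and $gA_2g^{-1}$ in the unipotent radical of one Borel subgroup. That radical generally contains many non-conjugate commutative $n$-dimensional subgroups with an open orbit, and nothing forces the two to agree. More seriously, nothing in your sketch uses $\mathcal{C}_x\subsetneq\mathbb{P}(T_xX)$. So it would apply verbatim to $X=\mathbb{P}^n$, where $\mathfrak{g}=\mathfrak{sl}_{n+1}=\mathfrak{g}_{-1}\oplus\mathfrak{g}_0\oplus\mathfrak{g}_1$ and $\mathfrak{g}_1$ acts faithfully on $\mathfrak{g}_{-1}$, yet the statement is false. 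On $\mathbb{P}^2$, the abelian algebras $\langle\partial_1,\partial_2\rangle$ and $\langle\partial_1+z_1\partial_2,\partial_2\rangle$ both have open orbit $\mathbb{C}^2$ but are not conjugate: the second contains a regular nilpotent of $\mathfrak{sl}_3$, the first only square-zero matrices.

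\emph{The formulation of your key claim.} Your first formulation is also wrong. The set of fields nonvanishing on the open orbit is not a subspace. For example, on $\mathbb{Q}^n$ take $N\in\mathfrak{g}_0=\mathfrak{co}_n$ nilpotent and $v\notin\operatorname{im}N$; then $v+Nz$ vanishes nowhere on $\mathbb{C}^n$ but does not lie in $\mathfrak{g}_{-1}$.

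\emph{The missing idea.} Apply Cartan--Fubini to a \emph{map} between the two open orbits rather than to vector fields. This is the Fu--Hwang argument the paper cites.
\begin{itemize}
\item Pick $x\in O_1\cap O_2$ and set $\psi_i(v)=\sigma_i(v,x)$.
\item Since $A_i$ is connected, it preserves $\mathcal{K}$. So in the chart $\psi_i$ the VMRT structure on $O_i$ is the trivial cone $\mathbb{C}^n\times Z_i$, with $Z_i=d\psi_i(0)^{-1}(\mathcal{C}_x)$.
\item The linear map $L=d\psi_2(0)^{-1}\circ d\psi_1(0)$ sends $Z_1$ onto $Z_2$. Hence $\varphi=\psi_2\circ L\circ\psi_1^{-1}\colon O_1\to O_2$ is a biholomorphism preserving $\mathcal{C}$.
\item By \cite[Main Theorem]{11}, $\varphi$ extends to $\Phi\in\operatorname{Aut}(X)$. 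The theorem applies because $\mathcal{C}_x$ is smooth, positive-dimensional and nonlinear, so its Gauss map is finite.
\item One checks $\Phi\circ\sigma_1(v,\cdot)=\sigma_2(Lv,\cdot)\circ\Phi$ on $O_1$, hence on all of $X$. This is the required isomorphism of structures, and it gives exactly the conjugacy $\Phi A_1\Phi^{-1}=A_2$ you wanted.
\end{itemize}

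In the special situation of Proposition~\ref{310}, where $\mathfrak{aut}(X)\cong\mathbb{C}^n\rtimes\mathbb{C}$, your conjugacy claim would be immediate, since $\mathbb{C}^n$ is then exactly the set of ad-nilpotent elements. However, the theorem is asserted under the general standing assumptions, where $\mathfrak{g}_{\ge 0}$ can be large.
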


Note that $\mathbb{P}^n$ admits many non-equivalent structures as an equivariant compactification of $\mathbb{C}^n$ (see \cite{13}); however, our assumption that $\mathcal{C}_x$ is strictly contained in $\mathbb{P}(T_xX)$ at a general point $x$ excludes the case $X\cong \mathbb{P}^n$.

\section{Fundamental forms on \texorpdfstring{$X$}{X}}\label{26-1-2-688}
To manifest the intrinsic equivariant compactification structure on $X$ described in Section \ref{26-1-2-689}, we consider the projective embedding $X\subset \mathbb{P} H^0(X,mD)^*$ of $X$, where $D$ is the ample generator of $\operatorname{Pic}(X)$, and $m$ is the smallest positive integer such that $mD$ is very ample.

First, we introduce the concept of Euler symmetry varieties proposed by Fu and Hwang \cite{8}:
\begin{definition}\label{26-1-2-666}
Let $Y\subset \mathbb{P}(V)$ be a linearly non-degenerate projective variety. For a nonsingular point $y\in \operatorname{Reg}(Y)$, a $\mathbb{C}^\times$-action on $Y$ is said to be of Euler type at $y$ if

(i) The $\mathbb{C}^\times$-action can be extended to a $\mathbb{C}^\times$-action on $\mathbb{P}(V)$.

(ii) $y$ is an isolated fixed point of this $\mathbb{C}^\times$-action on $Y$.

(iii) The induced action on $T_yY$ is given by scalar multiplication.

We say $Y$ is an Euler symmetry variety, if for a general point $y$ on $Y$, there exists a $\mathbb{C}^\times$-action on $Y$ which is of Euler type at $y$.
\end{definition}

By Proposition~\ref{26-1-2-662}, for every point $x\in X^o\cong \mathbb{C}^n$, there exists a $\mathbb{C}^\times$-action with $x$ as an isolated fixed point such that its induced action on $T_xX$ is scalar multiplication. Since $\rho(X)=1$, this action preserves the ample generator $D$ of $\operatorname{Pic}(X)$, and hence induces a linear $\mathbb{C}^\times$-action on $\mathbb{P}H^0(X,mD)^*$. This gives the following conclusion; its proof is the same as the corresponding argument in \cite[Theorem~5.5]{8}.

\begin{proposition}\label{26-1-2-667}
$X\subset \mathbb{P} H^0(X,mD)^*$ is an Euler symmetry variety.
\end{proposition}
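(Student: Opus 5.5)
The plan is to check the three conditions of Definition~\ref{26-1-2-666} directly for the embedding $X\subset \mathbb{P}H^0(X,mD)^*$ at every point of the open orbit $X^o$, which is dense and hence contains a general point. Fix $x\in X^o$. Conjugating by the translation in $\mathbb{C}^n$ that moves the origin to $x$, Proposition~\ref{26-1-2-662} gives a $\mathbb{C}^\times$-subgroup of $\mathbb{C}^n\rtimes\mathbb{C}^\times$ acting on $X^o\cong\mathbb{C}^n$ by $z\mapsto x+\alpha(z-x)$. This subgroup is the candidate action of Euler type at $x$.

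Conditions (ii) and (iii) are local and will be read off from this formula. On $X^o$, the only fixed point of $z\mapsto x+\alpha(z-x)$ is $x$, so $x$ is isolated in the fixed locus of $X$: any other fixed point near $x$ would lie in the open set $X^o$. The differential at $x$ is multiplication by $\alpha$ on $T_xX$, which gives (iii). Linear non-degeneracy of $X$ in $\mathbb{P}H^0(X,mD)^*$ holds automatically for a complete linear system, so it needs no separate argument.

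The main step is (i), extending the action linearly to $\mathbb{P}H^0(X,mD)^*$. I would argue as follows. Since $\mathbb{C}^\times$ is connected and $\operatorname{Pic}(X)\cong\mathbb{Z}$ is discrete, each $g\in\mathbb{C}^\times$ satisfies $g^*D\cong D$. Hence the line bundle $L=\mathcal{O}(mD)$ is invariant. I would then linearize: for a connected linear algebraic group acting on a normal projective variety, some power of an invariant line bundle admits a linearization. For $\mathbb{C}^\times$ one can do better. The obstruction to linearizing an invariant line bundle lies in $H^2$ of the character-group data, or equivalently the Mumford group $\mathcal{G}(L)$ is a central extension of $\mathbb{C}^\times$ by $\mathbb{C}^\times$. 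Such an extension is abelian and splits because $\mathbb{C}^\times$ is a torus. So $L$ itself is $\mathbb{C}^\times$-linearized, with no power needed. Alternatively, one can use that $H^1(X,\mathcal{O}_X)=0$ for Fano $X$, so $\operatorname{Pic}^0$ is trivial and the standard criterion (e.g.\ Knop--Kraft--Luna--Vust) yields a linearization of $L$.

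A linearization gives a linear $\mathbb{C}^\times$-action on $H^0(X,mD)$, hence on its dual, and the embedding is equivariant, which proves (i). Since $x\in X^o$ was arbitrary and $X^o$ is dense, $X$ is an Euler symmetry variety. The only delicate point is the linearization. I expect the torus splitting argument above to settle it, exactly as in \cite[Theorem~5.5]{8}.
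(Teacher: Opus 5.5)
Your proposal is correct and follows essentially the same route as the paper. Both arguments conjugate the $\mathbb{C}^\times$-factor of the $\mathbb{C}^n\rtimes\mathbb{C}^\times$-action from Proposition~\ref{26-1-2-662} to an arbitrary $x\in X^o$ to obtain conditions (ii) and (iii), and both use $\rho(X)=1$ (invariance of $D$ under a connected group) to extend the action to $\mathbb{P}H^0(X,mD)^*$ for condition (i). Your torus-splitting linearization just makes explicit the step the paper defers to \cite[Theorem~5.5]{8}, and it proves slightly more than needed, since (i) only asks for an action on the projective space.
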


Euler symmetry varieties exhibit extremely strong global rigidity in projective algebraic geometry. To describe this property, we need to introduce the concept of the fundamental forms of a projective variety $Y\subset \mathbb{P}(V)$ at a point:

\begin{definition}\label{26-1-2-668}
Let $y \in Y \subset \mathbb{P}(V)$ be a nonsingular point of a linearly non-degenerate projective variety. Let $L$ be the line bundle on $Y$ induced by the hyperplane line bundle on $\mathbb{P}(V)$. Let $\mathbf{m}_y\subset \mathcal{O}_{Y,y}$ be the maximal ideal of the local ring of $Y$ at $y$, and for each non-negative integer $k$, let $\mathbf{m}_y^k$ denote its $k$-th power. For a section $s \in H^0(Y,L)$, let $j^k_y(s)$ be the $k$-jet of $s$ at $y$ with $j^0_y(s) = s_y \in L_y$. We have a descending filtration of the dual space $V^* \subset H^0(Y,L)$:

$$V^* \cap \ker(j^k_y) \subset V^* \cap \ker(j^{k-1}_y)$$
This induces an injective homomorphism:

$$V^* \cap \ker(j^{k-1}_y)/V^* \cap \ker(j^k_y) \to L_y \otimes \mathrm{Sym}^k T_y^*Y$$
For each $k \geq 2$, the subspace $F^k_y \subset \mathrm{Sym}^k T_y^*Y$ defined by the image of this homomorphism is called the $k$-th fundamental form of $Y$ at $y$. For convenience, set $F^0_y = \mathrm{Sym}^0 T_y^*Y = \mathbb{C}$ and $F^1_y = \mathrm{Sym}^1 T_y^*Y = T_y^*Y$. The collection of subspaces $\mathbf{F}_y := \bigoplus_{k \geq 0} F^k_y \subset \bigoplus_{k \geq 0} \mathrm{Sym}^k T_y^*Y$ is called the system of fundamental forms of $Y$ at $y$. 
\end{definition}

For the explicit expressions of the above definition of the system of fundamental forms of $Y$ at $y$ in local holomorphic coordinate systems, one can refer to \cite[Lemma 2.5]{8}. Fu and Hwang proved in \cite[Proposition~2.7]{8} that:

\begin{theorem}[{\cite[Proposition~2.7]{8}}]\label{26-1-2-669}
An Euler symmetry variety $Y\subset \mathbb{P}(V)$ is determined, up to projective equivalence, by its system of fundamental forms at a general point $y$.
\end{theorem}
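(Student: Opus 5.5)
Following Fu--Hwang, I would prove that an Euler symmetry variety can be rebuilt from its fundamental forms at a general point $y$. The idea is to show that the Euler-type $\mathbb{C}^\times$-action at $y$ makes $Y$ the closure of the image of an explicit polynomial map from $T_yY$, and that this map is built only out of $\mathbf{F}_y$.

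\textbf{Step 1: diagonalize the action.} Let $\mathbb{C}^\times$ act on $Y$ with Euler type at $y$, extended to $\mathbb{P}(V)$ by condition (i) of Definition~\ref{26-1-2-666}. I would lift it to a linear action on $V$ and decompose $V=\bigoplus_k V_k$ into weight spaces. I normalize the lift so that $\hat y$ spans a weight space of weight $0$. Condition (iii) says $T_yY$ has weight $1$.

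\textbf{Step 2: identify the weight filtration with the osculating filtration.} Since $y$ is an isolated fixed point, the Białynicki-Birula cell of $y$ is an open affine cell $\cong T_yY$ in $Y$. On this cell the action is scalar multiplication. Hence the affine coordinates on $V$, restricted to the cell, are homogeneous polynomials in linear coordinates $z$ on $T_yY$, of degree equal to (minus) their weight. Non-degeneracy of $Y$ then gives two conclusions:
\begin{itemize}
\item the weights that occur are exactly $0,1,\dots,r$;
\item $V^*_k$ restricted to the cell is precisely the space of degree-$k$ polynomials spanned by the components of the embedding.
\end{itemize}
I would compare this with the jet filtration of Definition~\ref{26-1-2-668}. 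A homogeneous polynomial of degree $k$ vanishes to order exactly $k$ at $0$. So $V^*\cap\ker j^{k-1}_y=\bigoplus_{j\ge k}V_j^*$, and the image of $V_k^*$ in $\mathrm{Sym}^kT_y^*Y$ is exactly $F^k_y$. Care is needed to use the local-coordinate description of the fundamental forms (\cite[Lemma 2.5]{8}). In those coordinates the graded piece is the leading term, and here the leading term is the whole polynomial.

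\textbf{Step 3: conclude.} Choose bases $\{f^k_i\}$ of each $F^k_y$. Then $Y$ is the closure of the image of
\[
z\mapsto [1:z:(f^2_i(z))_i:\cdots:(f^r_i(z))_i],
\]
up to a linear change of coordinates on $V$. Another Euler symmetry variety $Y'$ with isomorphic systems $\mathbf{F}_{y'}\cong\mathbf{F}_y$ is handled as follows. An isomorphism $T_yY\cong T_{y'}Y'$ carrying $F^k_y$ to $F^k_{y'}$ for all $k$ gives identical parametrizations. Hence there is a projective equivalence $\mathbb{P}(V)\cong\mathbb{P}(V')$ mapping $Y$ onto $Y'$.

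\textbf{Main obstacle.} The main obstacle is Step 2, in two places. First, one must justify that the Białynicki-Birula cell of $y$ is open and equivariantly isomorphic to $T_yY$ with the linear action; $Y$ may be singular elsewhere, so I would work on its normalization or directly with the attracting set in $\mathbb{P}(V)$. Second, one must check that the weight decomposition matches the jet filtration exactly, including that no weight $k$ piece contributes lower-order terms. I would try the following argument first. Every function $g$ on the cell that is homogeneous of degree $k$ under scalar multiplication satisfies $g(tz)=t^kg(z)$. Such a $g$ must be a homogeneous polynomial of degree $k$, which settles both points at once.
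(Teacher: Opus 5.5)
Your proposal is correct and is essentially Fu--Hwang's argument for their Proposition~2.7, which the paper quotes without proof: the Euler-type action splits the jet filtration of $V^*$ at $y$ into weight spaces, equivariance forces the local parametrization of $Y$ over $T_yY$ to be homogeneous polynomial with degree-$k$ part spanning $F^k_y$, and hence $Y$ is projectively equivalent to $M(\mathbf{F}_y)$. The Step~2 obstacle you flag is best handled locally rather than through Bia\l{}ynicki-Birula theory: projecting to weight-one coordinates gives an equivariant local biholomorphism from a neighbourhood of the smooth point $y$ in $Y$ onto a ball in $T_yY$; comparing Taylor coefficients in $g(tz)=t^kg(z)$ for $t$ near $1$ makes each weight-$k$ coordinate a homogeneous polynomial of degree $k$; non-degeneracy then rules out negative weights and extra weight-zero directions, which is a conclusion rather than the normalization you stated in Step~1; and irreducibility of $Y$ turns the local polynomial graph into $Y=\overline{\phi_{\mathbf{F}}(T_yY)}$.
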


The theorem above shows that the system of fundamental forms at a general point is sufficient to characterize the corresponding Euler symmetry variety. To apply this structure to the equivariant compactification structure on $X$, we first introduce the following definition:

\begin{definition}\label{26-1-2-670}
Let $y \in Y \subset \mathbb{P}(V)$ be a nonsingular point of a non-degenerate projective variety. Let $r \geq 1$ satisfy $F^r_y \neq 0$ and $F^{r+i}_y = 0$ for all $i \geq 1$. Then $r$ is called the rank of $\mathbf{F}_y$. Define an embedding:

$$\phi_{\mathbf{F}}: T_yY \hookrightarrow \mathbb{P}(\mathbb{C} \oplus T_yY \oplus (F^2_y)^* \oplus \cdots \oplus (F^r_y)^*)$$
$$w \mapsto [1: w: (\phi_2 \mapsto \phi_2(w,w)): \cdots: (\phi_r \mapsto \phi_r(w,\cdots,w))]$$
Write $V_{\mathbf{F}}:=\mathbb{C} \oplus T_yY \oplus (F^2_y)^* \oplus \cdots \oplus (F^r_y)^*$. Let $M(\mathbf{F}_y) := \overline{\phi_{\mathbf{F}}(T_yY)}\subset \mathbb{P}(V_{\mathbf{F}})$. 
\end{definition}

Fu and Hwang proved that:

\begin{theorem}[{\cite[Theorem~3.7]{8}}]\label{26-1-2-671}
Let $y \in Y \subset \mathbb{P}(V)$ be a nonsingular point of a non-degenerate projective variety. Then

1. The translation action of $T_yY$ on itself as a complex vector group can be extended to an action of $T_yY$ on $M(\mathbf{F}_y)$, and $\phi_{\mathbf{F}}(T_yY)$ is the open orbit of this action.

2. The system of fundamental forms of $M(\mathbf{F}_y)\subset \mathbb{P}(V_{\mathbf{F}})$ at a general point is isomorphic to $\mathbf{F}_y$.
\end{theorem}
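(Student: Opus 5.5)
We need to prove Theorem 3.7 of Fu–Hwang: for $\mathbf{F}_y$ the fundamental forms, $M(\mathbf{F}_y)$ is an equivariant compactification of $T_yY$ whose open orbit is the image of $\phi_{\mathbf{F}}$, and whose system of fundamental forms at a general point is isomorphic to $\mathbf{F}_y$.

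The plan rests on one algebraic property of the system of fundamental forms: it is closed under differentiation. For every $w\in T_yY$ and every $k\ge 3$, contraction $\iota_w:\mathrm{Sym}^kT_y^*Y\to \mathrm{Sym}^{k-1}T_y^*Y$ maps $F^k_y$ into $F^{k-1}_y$. I would prove this first, using the local coordinate description of \cite[Lemma 2.5]{8}. In affine coordinates $z$ on $T_yY$ we have $F^k_y$ spanned by the degree-$k$ leading terms of the section functions vanishing to order $k$. The closure property is Cartan's classical prolongation property. It follows from writing $Y$ locally as a graph, normalized so that the $k$-th order terms are determined by the lower ones modulo higher order, and differentiating in the direction $w$. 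I expect this to be the main obstacle.

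\textbf{Part 1.} Set $W=T_yY$. For $v\in W$, define a linear map $A_v$ on $V_{\mathbf{F}}=\mathbb{C}\oplus W\oplus (F^2_y)^*\oplus\cdots\oplus(F^r_y)^*$ as follows. It sends the $\mathbb{C}$-component $1$ to $v\in W$, sends $w\in W$ to the functional $(\phi_2\mapsto 2\phi_2(v,w))$, and more generally sends $\xi\in (F^{k-1}_y)^*$ to the functional $\phi_k\mapsto k\,\xi(\iota_v\phi_k)$. This is well defined precisely because of the closure property, and it is nilpotent. The maps $A_v$ pairwise commute, because contractions commute. Set $g_v=\exp(A_v)$. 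Expanding $(w+v)^{\otimes k}$ binomially gives
\[
g_v\,\phi_{\mathbf{F}}(w)=\phi_{\mathbf{F}}(w+v),
\]
up to the obvious normalization of the factors $k$. Hence $v\mapsto g_v$ is a linear representation of the vector group $W$ on $\mathbb{P}(V_{\mathbf{F}})$ extending translation. It preserves $\phi_{\mathbf{F}}(W)$, and therefore preserves its closure $M(\mathbf{F}_y)$. The image $\phi_{\mathbf{F}}(W)$ is a single orbit. It is open in $M(\mathbf{F}_y)$, since $\phi_{\mathbf{F}}$ is an injective immersion (its $W$-coordinate is the identity) and $M$ is the closure of this orbit, which has dimension $\dim W$.

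\textbf{Part 2.} By homogeneity under the action of $W$, it suffices to compute the fundamental forms at the point $\phi_{\mathbf{F}}(0)=[1:0:\cdots:0]$. We use the affine chart given by the first coordinate and the parameter $w$. The linear functionals on $V_{\mathbf{F}}$ pull back to the constants, the linear forms, and the polynomials $\phi_k(w,\dots,w)$ for $\phi_k\in F^k_y$. There is no linear relation among components of different degrees, and $M$ is linearly non-degenerate since these polynomials are independent. Consequently the sections vanishing to order exactly $k$ are precisely the elements of $F^k_y$ in homogeneous form. So the $k$-th fundamental form at this point is $F^k_y$ under the identification $T_0M=W$. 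Since this point lies in the open orbit, it is a general point, and the conclusion follows.
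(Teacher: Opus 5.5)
\textbf{The gap is in your first step.} You propose to prove $\iota_wF^k_y\subset F^{k-1}_y$ at the given nonsingular point $y$ by a local computation at $y$. This closure property is false at special nonsingular points, and the statement fails there too.

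Take $y$ a flex of a smooth plane cubic $Y\subset\mathbb P^2$. The tangent line has contact order $3$, so $F^2_y=0$ and $F^3_y=\mathbb C z^3$. For $w\neq 0$, $\iota_w(z^3)$ is a nonzero multiple of $z^2\notin F^2_y$, so your $A_v$ is not even defined.

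In this example $M(\mathbf F_y)$ is the closure of $\{[1:w:w^3]\}$, the cuspidal cubic $x_0^2x_2=x_1^3$. Three of its points $w_1,w_2,w_3$ are collinear iff $w_1+w_2+w_3=0$, so $w=0$ is its only flex. At a general point its fundamental forms are therefore $F^2=\mathbb C z^2$, $F^3=0$, which is not isomorphic to $\mathbf F_y$. So part 2 of the statement fails at $y$. Moreover, no linear action of $W$ on $\mathbb P^2$ extends translation here, since $\langle 1,w,w^3\rangle$ is not translation invariant.

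\textbf{What is missing.} You need the hypothesis that $y$ is a \emph{general} point, together with Cartan's theorem (Theorem~\ref{26-1-2-673}), whose proof genuinely uses generality. The argument runs as follows.
On the open set where the dimensions of $V^*\cap\ker j^k_{y'}$ are locally constant, these spaces form holomorphic bundles. So a section vanishing to order $k+1$ at $y$ with leading term $\phi$ extends to a holomorphic family of sections vanishing to order $k+1$ at $y(t)$, where $\dot y(0)=w$. Differentiating in $t$ gives a section vanishing to order $k$ at $y$ whose degree-$k$ term is a nonzero multiple of $\iota_w\phi$. This gives $\iota_wF^{k+1}_y\subset F^k_y$.
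The extension step is exactly what breaks at the flex. No jet computation at the single point $y$ can replace it.

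\textbf{Comparison with the source.} The paper gives no proof and only cites Fu--Hwang. They work with symbol systems, i.e.\ systems having the closure property, and apply the result to $\mathbf F_y$ only at general $y$ via Cartan's theorem. That is all the paper needs, since every point of $X^o$ is general by homogeneity.

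\textbf{The rest of your argument is correct.} Once you assume $y$ general and invoke Theorem~\ref{26-1-2-673}, the remainder is essentially the Fu--Hwang argument. Your factors $k$ make $\exp(A_v)\phi_{\mathbf F}(w)=\phi_{\mathbf F}(w+v)$ hold exactly, and the $A_v$ commute. The computation at $[1:0:\cdots:0]$, followed by transport via the linear $W$-action, then gives part 2.
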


We now apply the above theorem to $X$. Take an arbitrary point $x\in X^o$. Theorem \ref{26-1-2-671} and Theorem \ref{26-1-2-669} show that $X\subset \mathbb{P} H^0(X,mD)^*$ is projectively isomorphic to $M(\mathbf{F}_x)\subset \mathbb{P}(V_{\mathbf{F}})$, where $x$ corresponds to the image of $0\in T_xX$ under the map $\phi_{\mathbf{F}}:T_xX\hookrightarrow M(\mathbf{F}_x)=X$. Therefore, by Theorem \ref{26-1-2-671}, we can express the $\mathbb{C}^n\rtimes \mathbb{C}^\times$-action on $X$ through the system of fundamental forms at $x$, specifically:

1. The open orbit is precisely $\phi_{\mathbf{F}}(T_xX)$.

2. The $\mathbb{C}^\times$-action on $X$ is exactly the restriction of the weighted scalar multiplication in $\mathbb{P}(V_{\mathbf{F}})$ given by
\[\lambda\cdot[t:w:f_2:\cdots:f_r]=[t:\lambda w: \lambda^2 f_2:\cdots:\lambda^r f_r]\]
to $X=M(\mathbf{F}_x)$.

Next, we further investigate how the minimal rational curves in $\mathcal{K}$ can be expressed via the system of fundamental forms at a general point $x$. For this, we need to examine the base locus of polynomials in $F^k_x\subset \mathrm{Sym}^k T_x^* X$. For a general projective variety, we give the following definition:

\begin{definition}\label{26-1-2-672}
Let $y \in Y \subset \mathbb{P}(V)$ be a nonsingular point of a non-degenerate projective variety. For any $k \geq 1$, define the base locus of $F^k_y$ as:
$$\mathbf{Bs}(F^k_y) := \{[w] \in \mathbb{P}(T_yY) \mid \phi(w,\cdots,w) = 0 \text{ for all } \phi \in F^k_y\}$$
A positive integer $l$ is called the order of the system of fundamental forms $\mathbf{F}_y$ if $\mathbf{Bs}(F^l_y)=\emptyset$ and $\mathbf{Bs}(F^{l+1}_y)\ne \emptyset$.
\end{definition}

The following classical theorem of Cartan gives the prolongation property of fundamental forms at a general point:
\begin{theorem}[{\cite[Theorem~3.3]{8}}]\label{26-1-2-673}
Let $y \in Y \subset \mathbb{P}(V)$ be a general point of a non-degenerate projective variety. For any $k \geq 1$, $w \in T_yY$, and $\phi \in F^{k+1}_y$, the map $\mathrm{Sym}^k T_yY \to \mathbb{C}$ defined by $w_1 \odot \cdots \odot w_k \mapsto \phi(w,w_1,\cdots,w_k)$ is an element of $F^k_y$.
\end{theorem}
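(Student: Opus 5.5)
The plan is to deform a section vanishing to high order at $y$ along a curve through $y$ in the direction $w$, and then differentiate. The generality of $y$ is what makes such a deformation possible. First I fix an affine chart of $\mathbb{P}(V)$ around $y$, which trivializes $L$ near $y$, together with local holomorphic coordinates $z=(z_1,\dots,z_n)$ on $Y$ centred at $y$, identified with $T_yY$. In this trivialization an element $s\in V^*$ becomes a holomorphic function near $y$. By \cite[Lemma 2.5]{8}, the $k$-jet conditions defining the filtration and $F^k_y$ are read off from the Taylor expansion of that function. Since the jets are taken in $L\otimes\mathrm{Sym}^\bullet T^*Y$, coordinate-change effects only alter lower-order terms, so this identification is harmless for leading terms.

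The first step uses generality. For each $k$, the map $V^*\to J^k_{y'}(L)$ given by $s\mapsto j^k_{y'}(s)$ has rank that is lower semicontinuous in $y'$. So on a dense Zariski open set the rank is locally constant, and I take $y$ in the intersection of these open sets over $k\le k_0$, which suffices for the given $k$. Then $y'\mapsto V^*\cap\ker j^k_{y'}$ is the kernel of a constant-rank morphism of vector bundles, hence a holomorphic subbundle of the trivial bundle $V^*$ near $y$. Now let $\phi\in F^{k+1}_y$, realized by some $s\in V^*\cap\ker j^k_y$, so that $s(z)=\phi(z,\dots,z)+O(|z|^{k+2})$. Put $y_t$ equal to the point with coordinate $tw$. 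By the subbundle property, there is a holomorphic family $s_t\in V^*\cap\ker j^k_{y_t}$ with $s_0=s$. Write $s':=\frac{d}{dt}s_t|_{t=0}\in V^*$.

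The second step is the expansion. The condition $j^k_{y_t}(s_t)=0$ says $s_t(tw+u)=O(|u|^{k+1})$. Expanding to first order in $t$ gives $s(tw+u)+t\,s'(tw+u)+O(t^2)$, and $s(tw+u)=\phi(u,\dots,u)+(k+1)t\,\phi(w,u,\dots,u)+O(t^2)+O(|u|^{k+2})$. Collecting the coefficient of $t$ and requiring that it vanish to order $k+1$ in $u$ yields two facts:
\begin{itemize}
\item the Taylor terms of $s'$ at $y$ of degree $<k$ vanish, i.e. $s'\in V^*\cap\ker j^{k-1}_y$;
\item the degree-$k$ part of $s'$ equals $-(k+1)\phi(w,u,\dots,u)$.
\end{itemize}
Hence the image of $s'$ in $F^k_y$ is $-(k+1)$ times the contraction $\phi(w,\cdot,\dots,\cdot)$. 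Since $F^k_y$ is a linear subspace, the contraction itself lies in $F^k_y$.

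The main obstacle is the first step: making sure $y$ is general enough that the spaces $V^*\cap\ker j^k_{y'}$ form a holomorphic bundle, so that the lifts $s_t$ exist. It also requires checking that the identification of jets with Taylor coefficients, made in a fixed trivialization, is compatible at the nearby points $y_t$ to the order used. I expect this to be routine but it is where care is needed. The case $k=1$ is automatic, since $F^1_y=T_y^*Y$.
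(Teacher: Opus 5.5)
Your proof is correct and complete. The paper gives no proof of this statement: it quotes it from \cite[Theorem~3.3]{8} as Cartan's classical prolongation property. That result is classically proved with moving frames adapted to the osculating flag. Near a general point the osculating dimensions are constant, so the flag is a bundle, and differentiating the frame relations shows that $F^{k+1}_y$ lies in the first prolongation of $F^k_y$. You use the same two ingredients, namely local constancy near a general point and then differentiation along a curve through $y$ with tangent $w$. The difference is that you replace adapted frames by a holomorphic lift $s_t\in V^*\cap\ker j^k_{y_t}$ of a single section, followed by a first-order Taylor computation. This gives an elementary, self-contained argument that isolates exactly the genericity needed: $\operatorname{rank}(j^k_{y'}|_{V^*})$ must be constant for $y'$ near $y$, for the one $k$ in question (the paper only uses $k=m+1$). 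The frame formalism handles all orders at once and fits into Cartan's general machinery, but it needs more setup.

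A few small remarks on your write-up.

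The remainder $R=O(|z|^{k+2})$ in $s$ contributes $\partial_wR(u)=O(|u|^{k+1})$, not $O(|u|^{k+2})$, to the coefficient of $t$ in $s(tw+u)$. This does no harm. Let $G(t,u)$ be the local function of $s_t$ evaluated at $tw+u$; the identities $\partial_t\partial_u^{\alpha}G(0,0)=0$ for $|\alpha|\le k$ only determine that coefficient modulo terms of order $k+1$.

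The compatibility issue you raise at the end is not a real one. The condition $j^k_{y_t}(s_t)=0$ just says that the local function of $s_t$ vanishes to order $k+1$ at $y_t$, and this does not depend on the trivialization or the coordinates.

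If you want a single general $y$ that works for all $k$ at once, consider the open sets where $j^k|_{V^*}$ is injective. They increase with $k$ and cover the smooth locus, since a nonzero section cannot vanish to infinite order at a smooth point and $V^*$ is finite-dimensional. By noetherianity one of them is the whole smooth locus, so only finitely many open conditions are needed.
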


As an immediate corollary of this theorem, we obtain
\begin{corollary}\label{26-1-2-674}
$\emptyset=\mathbf{Bs}(F^1_y)\subset  \mathbf{Bs}(F^2_y)\subset \cdots\subset \mathbf{Bs}(F^r_y)\subset \mathbf{Bs}(F^{r+1}_y)=\mathbb{P}(T_yY)$.
\end{corollary}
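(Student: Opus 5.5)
The plan is to verify the two endpoints directly from the definitions, and to obtain the chain of inclusions by a one-line application of Cartan's prolongation property, Theorem~\ref{26-1-2-673}, at the general point $y$.

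\textbf{Endpoints.} By convention $F^1_y=T_y^*Y$. For any $[w]\in\mathbb{P}(T_yY)$ we have $w\neq 0$, so some linear form $\phi\in T_y^*Y$ satisfies $\phi(w)\neq 0$. Hence $\mathbf{Bs}(F^1_y)=\emptyset$.

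At the other end, $r$ is the rank of $\mathbf{F}_y$ (Definition~\ref{26-1-2-670}), so $F^{r+1}_y=0$. The defining condition of $\mathbf{Bs}(F^{r+1}_y)$ is then vacuous, and $\mathbf{Bs}(F^{r+1}_y)=\mathbb{P}(T_yY)$.

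\textbf{Inclusions.} Fix $k\ge 1$ and $[w]\in \mathbf{Bs}(F^k_y)$, and let $\phi\in F^{k+1}_y$ be arbitrary. Theorem~\ref{26-1-2-673}, applied with this same vector $w$, says that the symmetric $k$-form
\[
\psi(w_1,\dots,w_k):=\phi(w,w_1,\dots,w_k)
\]
lies in $F^k_y$. Evaluating on the diagonal gives $\phi(w,\dots,w)=\psi(w,\dots,w)$, and the right-hand side is $0$ because $[w]\in\mathbf{Bs}(F^k_y)$ and $\psi\in F^k_y$. Since $\phi$ was arbitrary, $[w]\in\mathbf{Bs}(F^{k+1}_y)$. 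This proves $\mathbf{Bs}(F^k_y)\subset\mathbf{Bs}(F^{k+1}_y)$ for every $k\ge1$, and chaining these for $k=1,\dots,r$ yields the stated sequence of inclusions.

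There is no real obstacle here. The only care needed is that Theorem~\ref{26-1-2-673} holds at a \emph{general} point, so the corollary is asserted at general $y$. The prolongation must be taken in the direction of the same vector $w$ that is being tested, which is exactly what makes the diagonal evaluation work.
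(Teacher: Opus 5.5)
Your proof is correct and is exactly the argument the paper intends. The paper states the corollary as an immediate consequence of Cartan's prolongation property (Theorem~\ref{26-1-2-673}) without writing out a proof. Your two endpoint checks from the definitions, together with the diagonal evaluation of the prolonged form $\phi(w,w_1,\dots,w_k)$ in the direction of the same $w$, supply precisely that omitted step.
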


Returning to the analysis on $X$. For any $l\in \mathcal{K}_{x}$, by the proof of Proposition \ref{26-1-2-663}, there exists $[w]\in \mathbb{P}(T_xX)$ such that $l$ is equal to the rational normal curve
\[\overline{\phi_{\mathbf{F}}(\mathbb{C}\cdot w)}\subset \mathbb{P} H^0(X,mD)^*\]
Since $l\cdot D=1$ (Proposition \ref{26-1-2-664}), the degree of $l$ in $\mathbb{P} H^0(X,mD)^*$ is $m$. In particular, the tangent direction $[w]$ of $l$ at $x$ lies in $\mathbf{Bs}(F^{m+1}_x)$. On the other hand, since $D$ is ample, any rational curve on $X$ has degree at least $m$ in $\mathbb{P} H^0(X,mD)^*$. In particular, for any $[w]\in \mathbb{P}(T_xX)$, the degree of the rational normal curve $\overline{\phi_{\mathbf{F}}(\mathbb{C}\cdot w)}\subset \mathbb{P} H^0(X,mD)^*$ is at least $m$. Therefore, by Corollary \ref{26-1-2-674}, we have $\mathbf{Bs}(F^{m}_x)=\emptyset$. This gives the concrete form, in the present setting, of the method in \cite[Proposition~4.3]{8} and \cite[Theorem~5.5]{8}. Thus we obtain the following conclusion.
\begin{proposition}\label{26-1-2-675}
For a general $x\in X,\ \mathcal{C}_x\subset \mathbf{Bs}(F^{m+1}_x)$, and the order of $\mathbf{F}_x$ is equal to $m$.
\end{proposition}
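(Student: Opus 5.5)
The plan is to work in the model $X=M(\mathbf{F}_x)\subset\mathbb{P}(V_{\mathbf F})$ and read off degrees of the orbit closures $\overline{\phi_{\mathbf F}(\mathbb{C}\cdot w)}$ directly from the fundamental forms. For $[w]\in\mathbb{P}(T_xX)$ the curve $t\mapsto[1:tw:(\phi_2\mapsto t^2\phi_2(w,w)):\cdots:(\phi_r\mapsto t^r\phi_r(w,\ldots,w))]$ is a (possibly degenerate) rational normal curve. Its degree is the largest $k$ such that some $\phi\in F^k_x$ has $\phi(w,\ldots,w)\neq 0$, i.e. the largest $k$ with $[w]\notin\mathbf{Bs}(F^k_x)$. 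By Corollary~\ref{26-1-2-674} the base loci are nested, so this degree equals $k(w)$ exactly when $[w]\in\mathbf{Bs}(F^{k(w)+1}_x)\setminus\mathbf{Bs}(F^{k(w)}_x)$.

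\textbf{First claim.} For $[w]\in\mathcal{C}_x$, the proof of Proposition~\ref{26-1-2-663} gives a curve $l\in\mathcal{K}_x$ tangent to $w$ with $l=\overline{\phi_{\mathbf F}(\mathbb{C}\cdot w)}$. The point to secure is that this orbit closure is exactly $l$, counted once. I would argue this from the triviality of the $\mathbb{C}^\times$-deformation, using Kebekus' finiteness of the tangent map. Since $l$ is an orbit closure of the $\mathbb{C}^\times$-action fixing $x$ with scalar action on $T_xX$, it is the closure of $\phi_{\mathbf F}$ of the line through $w$. By Proposition~\ref{26-1-2-664}, $\deg l=l\cdot mD=m$. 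Hence $k(w)=m$, so $[w]\in\mathbf{Bs}(F^{m+1}_x)$. The set of tangent directions of curves in $\mathcal{K}_x$ is dense in $\mathcal{C}_x$ and the base locus is closed, so $\mathcal{C}_x\subset\mathbf{Bs}(F^{m+1}_x)$. Because $\mathcal{C}_x\neq\emptyset$, this also shows $\mathbf{Bs}(F^{m+1}_x)\neq\emptyset$.

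\textbf{Second claim: $\mathbf{Bs}(F^m_x)=\emptyset$.} Suppose $[w]\in\mathbf{Bs}(F^m_x)$. Then by the nesting in Corollary~\ref{26-1-2-674}, the curve $\overline{\phi_{\mathbf F}(\mathbb{C}\cdot w)}$ has degree $k(w)\le m-1$; it is a genuine curve since its degree-one coordinate $tw$ is nonconstant. But it is a rational curve $C\subset X$, and its degree in $\mathbb{P}H^0(X,mD)^*$ is $C\cdot mD=m(C\cdot D)\ge m$, because $D$ is an ample integral generator and so $C\cdot D\ge 1$. This is a contradiction, so $\mathbf{Bs}(F^m_x)=\emptyset$, and together with the first claim the order of $\mathbf{F}_x$ is exactly $m$.

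The main obstacle is the identification of the degree of the orbit closure with $k(w)$. The map $t\mapsto\phi_{\mathbf F}(tw)$ must be birational onto its image, not a multiple cover. This holds because the linear coordinate $tw$ is injective in $t$, so the map is generically injective, and the degree equals the top nonvanishing power of $t$ by the standard computation for monomial curves whose exponents include $0$ and $1$. The exponent $1$ forces the greatest common divisor of the exponents to be $1$. I would also note that the statement is only for general $x$, so Theorem~\ref{26-1-2-673} applies, and it suffices to take $x\in X^o$ general.
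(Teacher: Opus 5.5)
Your proposal is correct and follows essentially the same route as the paper. Each curve of $\mathcal{K}_x$ is the orbit closure $\overline{\phi_{\mathbf F}(\mathbb{C}\cdot w)}$ of $D$-degree $1$, which gives $\mathcal{C}_x\subset\mathbf{Bs}(F^{m+1}_x)$; the bound $C\cdot mD\ge m$ for every rational curve, combined with the nesting in Corollary~\ref{26-1-2-674}, then forces $\mathbf{Bs}(F^m_x)=\emptyset$. Your extra checks make explicit what the paper leaves implicit: the parametrization is birational because of the exponent-one coordinate, so its degree is the top nonvanishing exponent, and the density/closedness step covers all of $\mathcal{C}_x$.
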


The inclusion $\mathcal{C}_x\subset \mathbf{Bs}(F^{m+1}_x)$ mentioned above is not necessarily an equality, because $X$ may not possess only a single family of minimal rational curves. However, for any $[w]\in \mathbf{Bs}(F^{m+1}_x)$, the deformation family of the rational normal curve $\overline{\phi_{\mathbf{F}}(\mathbb{C}\cdot w)}$ given by the $\mathbb{C}^n$-action on $X$ dominates $X$; hence, there exists a family of minimal rational curves $\mathcal{K}'$ on $X$ such that its VMRT $\mathcal{C}'_x$ at $x$ contains $[w]$. Thus, we have:

\begin{proposition}\label{26-1-2-676}
1. $\mathcal{C}_x$ is an irreducible component of $\mathbf{Bs}(F^{m+1}_x)$. In particular, since $\mathbf{Bs}(F^{m+1}_x)$ is the intersection of $\dim(F^{m+1}_x)$ hypersurfaces, we have $\dim(\mathcal{C}_x)\ge n-1-\dim(F^{m+1}_x)$.

2. If $\mathcal{K}$ is the unique family of minimal rational curves on $X$, then $\mathcal{C}_x=\mathbf{Bs}(F^{m+1}_x)$.
\end{proposition}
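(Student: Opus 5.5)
The plan is to compare $\mathcal{C}_x$ with the variety of tangent directions of all rational curves of degree $m$ through $x$, and then to use irreducibility and dimension counting. I work at a general point $x\in X^o$ and use the identification $X=M(\mathbf{F}_x)$ with the open orbit $\phi_{\mathbf{F}}(T_xX)$.

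\textbf{Step 1: the inclusion.} By Proposition~\ref{26-1-2-675} we already have $\mathcal{C}_x\subset \mathbf{Bs}(F^{m+1}_x)$, and $\mathcal{C}_x$ is irreducible, since it is the VMRT of the single family $\mathcal{K}$ and is assumed non-singular and irreducible. To show that it is an irreducible component, it therefore suffices to show that the irreducible component $W\subset \mathbf{Bs}(F^{m+1}_x)$ containing $\mathcal{C}_x$ satisfies $W\subset \mathcal{C}_x$.

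\textbf{Step 2: curves through directions in the base locus.} Take $[w]\in W$. By Corollary~\ref{26-1-2-674}, $w$ is annihilated by all $F^k_x$ with $k\ge m+1$. Hence the curve $\overline{\phi_{\mathbf{F}}(\mathbb{C}\cdot w)}$ is a rational curve of degree at most $m$ in $\mathbb{P}H^0(X,mD)^*$. Since every rational curve has degree at least $m$, it has degree exactly $m$, so its $D$-degree is $1$.

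\textbf{Step 3: these curves form a family of minimal rational curves.} Let $\mathcal{W}$ be the family obtained by translating the curves from Step 2 with the $\mathbb{C}^n$-action, over $[w]\in W$. This family dominates $X$. Its members are curves of $D$-degree $1$, so they cannot degenerate into reducible curves, and the corresponding component $\mathcal{K}'$ of $\operatorname{RatCurves}^n(X)$ has projective $\mathcal{K}'_x$. Thus $\mathcal{K}'$ is a family of minimal rational curves whose VMRT $\mathcal{C}'_x$ contains $W$. Moreover, the tangent map of a line through $x$ (a curve of $D$-degree $1$) determines the curve, because it equals the orbit closure $\overline{\phi_{\mathbf{F}}(\mathbb{C}\cdot w)}$ by the argument of Proposition~\ref{26-1-2-663}.

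This is the main obstacle: showing that $\mathcal{K}'=\mathcal{K}$ when $[w]$ is a general point of $W$. First, the families parametrizing the orbit closures $\overline{\phi_{\mathbf{F}}(\mathbb{C}\cdot w)}$ for $[w]\in W$ vary continuously with $[w]$. Since $W$ is irreducible, all curves in $W$ lie in one irreducible family of $D$-degree-$1$ rational curves. Second, this family contains the curves with tangent directions in $\mathcal{C}_x\subset W$, which belong to $\mathcal{K}$. Every irreducible component of $\operatorname{RatCurves}^n$ containing $\mathcal{K}$ is $\mathcal{K}$ itself. Therefore $W\subset \mathcal{C}_x$, and so $\mathcal{C}_x=W$ is an irreducible component of $\mathbf{Bs}(F^{m+1}_x)$. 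If this last identification is delicate, I would instead compare the dimensions of the families directly: $\dim\mathcal{K}_x=\dim\mathcal{C}_x$ by Kebekus' finiteness, and the irreducible family over $W$ has dimension $\dim W$.

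\textbf{Step 4: the dimension bound.} $\mathbf{Bs}(F^{m+1}_x)\subset\mathbb{P}^{n-1}$ is cut out by $\dim F^{m+1}_x$ hypersurfaces, namely a basis of $F^{m+1}_x$. By Krull's principal ideal theorem, each of its irreducible components has dimension at least $n-1-\dim F^{m+1}_x$. In particular this holds for $\mathcal{C}_x$.

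\textbf{Step 5: uniqueness of the family.} For part 2, suppose $\mathcal{K}$ is the unique family of minimal rational curves. By Step 3, every $[w]\in \mathbf{Bs}(F^{m+1}_x)$ lies in the VMRT of some family of minimal rational curves. Here I apply Step 3 to every irreducible component of $\mathbf{Bs}(F^{m+1}_x)$, not only to $W$. That family must be $\mathcal{K}$, so $[w]\in\mathcal{C}_x$, which gives $\mathcal{C}_x=\mathbf{Bs}(F^{m+1}_x)$.
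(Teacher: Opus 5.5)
Your proof is correct. Steps 4 and 5 are the same as in the paper, but your main argument for the key identification $W=\mathcal{C}_x$ differs from the paper's.

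The paper never decides which family the curves $\overline{\phi_{\mathbf{F}}(\mathbb{C}\cdot w)}$, $[w]\in W$, belong to. It uses only three facts: each such $[w]$ lies in the VMRT $\mathcal{C}'_x$ of \emph{some} family of $D$-degree one; every such family has $\dim\mathcal{C}'_x=-K_X\cdot l'-2=i(X)-2=\dim\mathcal{C}_x$; and there are only finitely many such families. Hence $\dim W\le\dim\mathcal{C}_x$, and irreducibility forces $W=\mathcal{C}_x$. Your fallback points toward this, but as written it omits exactly the input that all $D$-degree-one families have VMRTs of the same dimension $i(X)-2$.

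Your main route instead combines irreducibility of the parameter space $\mathbb{C}^n\times W$ with maximality of the irreducible component $\mathcal{K}$. It gives a sharper conclusion: every curve $\overline{\phi_{\mathbf{F}}(\mathbb{C}\cdot w)}$ with $[w]\in W$ is itself a member of $\mathcal{K}$. It also needs neither the anticanonical degree computation nor the finiteness of the families.

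For the maximality step to apply, you should state explicitly that $\mathcal{W}$ contains a dense set of members of $\mathcal{K}$, not merely some of them. This holds for two reasons. First, $\mathcal{K}$ is stable under the connected group $\mathbb{C}^n$. Second, by Proposition~\ref{26-1-2-663} the members of $\mathcal{K}$ through $x$ are orbit closures $\overline{\phi_{\mathbf{F}}(\mathbb{C}\cdot w)}$ with $[w]\in\mathcal{C}_x\subset W$. So $\mathcal{W}$ contains every member of $\mathcal{K}$ meeting $X^o$, which is an open dense subset of $\mathcal{K}$.

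This is where the translations are essential. The curves through $x$ alone form only $\mathcal{K}_x$, and your sentence about components containing $\mathcal{K}$ would then have no content. It is also cleaner to run the maximality argument for the closure of the image of $\mathcal{W}$ in the Chow variety, where the family is visibly parametrized by an irreducible variety. Working directly in the normalization $\operatorname{RatCurves}^n(X)$ is less transparent.
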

\begin{proof}
1. By assumption, $\mathcal{C}_x$ is irreducible. Let $B_0$ be an irreducible component of $\mathbf{Bs}(F^{m+1}_x)$ containing $\mathcal{C}_x$. By the preceding discussion, for any point $[w]$ of $B_0$, there exists a family of minimal rational curves $\mathcal{K}'$ such that its VMRT $\mathcal{C}'_x$ at $x$ contains $[w]$, and every member $l'$ of this family satisfies $D\cdot l'=1$. Let $i(X)$ denote the Fano index of $X$, namely the positive integer satisfying $-K_X=i(X)D$. Then
\[
\dim \mathcal{C}'_x=\dim \mathcal{K}'_x=-K_X\cdot l'-2=i(X)-2.
\]
By Proposition~\ref{26-1-2-664}, any member of the chosen family $\mathcal{K}$ also satisfies $D\cdot l=1$. Hence
\[
\dim \mathcal{C}_x=i(X)-2.
\]
Since there are only finitely many families of minimal rational curves of $D$-degree $1$, we have $\dim B_0=\dim \mathcal{C}_x$, and hence $B_0=\mathcal{C}_x$.

2. This follows from the preceding discussion.
\end{proof}

Let $I(\mathcal{C}_x)\subset \bigoplus_{k\ge 0}\mathrm{Sym}^kT^*_xX$ denote the ideal generated by all homogeneous polynomials vanishing on $\mathcal{C}_x$. The following proposition gives the sufficient criterion needed in this paper:

\begin{proposition}\label{26-1-2-677}
For a general $x\in X$, assuming that $\mathcal{C}_x\subset \mathbb{P}(T_xX)$ is a smooth linearly non-degenerate complete intersection satisfying $I(\mathcal{C}_x)_{m}=0$, then $I(\mathcal{C}_x)$ is the ideal generated by $F^{m+1}_x$.
\end{proposition}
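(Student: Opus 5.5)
We want to show that $I(\mathcal{C}_x)$ equals the ideal $(F^{m+1}_x)$ generated by $F^{m+1}_x$. The plan is to prove the two inclusions $(F^{m+1}_x)\subset I(\mathcal{C}_x)$ and $I(\mathcal{C}_x)\subset (F^{m+1}_x)$, the second by a degree count on the complete intersection. Write $\mathcal{C}_x=V(g_1,\dots,g_c)$ with $g_i$ homogeneous of degrees $m_1\le\cdots\le m_c$, forming a regular sequence that generates $I(\mathcal{C}_x)$. This uses that a smooth complete intersection is scheme-theoretically cut out by its defining equations, so $I(\mathcal{C}_x)=(g_1,\dots,g_c)$ is saturated and radical.

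\emph{Step 1 (easy inclusion).} By Proposition~\ref{26-1-2-675}, $\mathcal{C}_x\subset\mathbf{Bs}(F^{m+1}_x)$. Hence every element of $F^{m+1}_x$ vanishes on $\mathcal{C}_x$, that is, $F^{m+1}_x\subset I(\mathcal{C}_x)_{m+1}$, and so $(F^{m+1}_x)\subset I(\mathcal{C}_x)$.

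\emph{Step 2 (degrees of the generators).} The hypothesis $I(\mathcal{C}_x)_m=0$ gives $I(\mathcal{C}_x)_k=0$ for all $k\le m$, so every $m_i\ge m+1$. Next we bound the codimension. Proposition~\ref{26-1-2-676}(1) says $\mathcal{C}_x$ is an irreducible component of $\mathbf{Bs}(F^{m+1}_x)$, which is cut out by forms of degree $m+1$ lying in $I(\mathcal{C}_x)_{m+1}$. If some $m_i>m+1$, then $I(\mathcal{C}_x)_{m+1}=\langle g_1,\dots,g_j\rangle$ is spanned only by the generators of degree exactly $m+1$, with $j<c$. In that case $\mathbf{Bs}(F^{m+1}_x)\supset V(g_1,\dots,g_j)$, and every component of $V(g_1,\dots,g_j)$ has codimension $j<c$ (it is a complete intersection). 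Since $\mathcal{C}_x\subset V(g_1,\dots,g_j)$, the variety $\mathcal{C}_x$ would be strictly contained in a component of $V(g_1,\dots,g_j)\subset\mathbf{Bs}(F^{m+1}_x)$, contradicting that it is a component of the base locus. Hence all $m_i=m+1$, and $I(\mathcal{C}_x)_{m+1}=\langle g_1,\dots,g_c\rangle$ has dimension $c$.

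\emph{Step 3 (equality in degree $m+1$).} It remains to show $F^{m+1}_x=I(\mathcal{C}_x)_{m+1}$; this is the main obstacle. We know $F^{m+1}_x\subset\langle g_1,\dots,g_c\rangle$ and that $\mathcal{C}_x$ is a component of $V(F^{m+1}_x)$, so $V(F^{m+1}_x)$ has a component of codimension $c$. This forces $\dim F^{m+1}_x\ge c$ by Krull's height theorem (equivalently, the inequality of Proposition~\ref{26-1-2-676}(1)). Since $F^{m+1}_x$ is a subspace of the $c$-dimensional space $\langle g_1,\dots,g_c\rangle$, it is the whole space. Then $(F^{m+1}_x)=(g_1,\dots,g_c)=I(\mathcal{C}_x)$.

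The delicate point, which I would check carefully, is the claim that a component of $\mathbf{Bs}(F^{m+1}_x)$ containing $\mathcal{C}_x$ can only have codimension $c$. This is where Proposition~\ref{26-1-2-676}(1) is used as a genuine set-theoretic component statement, together with the fact that complete intersections are equidimensional.
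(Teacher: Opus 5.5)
Your proposal is correct and follows essentially the paper's argument. You use $F^{m+1}_x\subset I(\mathcal{C}_x)_{m+1}$, note that $I(\mathcal{C}_x)_{m+1}$ is spanned by the degree-$(m+1)$ generators because $I(\mathcal{C}_x)_m=0$, and combine this with $\dim F^{m+1}_x\ge c$ from Proposition~\ref{26-1-2-676}(1). Your Step~2 is valid but redundant: the chain $c\le\dim F^{m+1}_x\le\dim I(\mathcal{C}_x)_{m+1}\le c$ already forces every generator to have degree $m+1$, which is exactly how the paper concludes.
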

\begin{proof}
Let $c$ be the codimension of $\mathcal{C}_x\subset \mathbb{P}(T_xX)$. Since $\mathcal{C}_x$ is a complete intersection, there exist homogeneous polynomials $f_1,\cdots,f_c$ that generate $I(\mathcal{C}_x)$. Suppose there are $k$ polynomials among $f_1,\cdots,f_c$ with degree equal to $m+1$. Since $I(\mathcal{C}_x)_{m}=0$, the vector space $I(\mathcal{C}_x)_{m+1}$ is linearly spanned by these $k$ polynomials, so $\dim(I(\mathcal{C}_x)_{m+1})=k$. On the other hand, the inclusions $F^{m+1}_x\subset I(\mathbf{Bs}(F^{m+1}_x))_{m+1}\subset I(\mathcal{C}_x)_{m+1}$ imply that $\dim(F^{m+1}_x)\le k$. By Proposition \ref{26-1-2-676}(1), we have $\dim(F^{m+1}_x)\ge c\ge k$. Therefore, $k=c$, and $I(\mathcal{C}_x)_{m+1}=F^{m+1}_x$.
\end{proof}

The preceding proposition further implies the following:
\begin{proposition}\label{26-1-2-678}
For a general $x\in X$, assuming that $\mathcal{C}_x\subset \mathbb{P}(T_xX)$ is a smooth linearly non-degenerate complete intersection and $I(\mathcal{C}_x)$ is the ideal generated by $F^{m+1}_x$, then the rank of the system of fundamental forms $\mathbf{F}_x$ is $r=m+1$.
\end{proposition}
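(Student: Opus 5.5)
We need to show $F^{m+1}_x\neq 0$ and $F^{m+2}_x=0$. The first part is immediate. By Proposition~\ref{26-1-2-675}, $\mathcal{C}_x\subset \mathbf{Bs}(F^{m+1}_x)$. Since $\mathcal{C}_x$ is strictly contained in $\mathbb{P}(T_xX)$, the ideal $I(\mathcal{C}_x)$ is nonzero. As $I(\mathcal{C}_x)$ is generated by $F^{m+1}_x$ by hypothesis, we get $F^{m+1}_x\neq 0$, and hence $r\ge m+1$. The content of the statement is therefore the vanishing of $F^{m+2}_x$. By Cartan prolongation (Theorem~\ref{26-1-2-673}), this also gives $F^{k}_x=0$ for all $k\ge m+2$.

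For the vanishing, I would argue by contradiction. Suppose $0\ne\phi\in F^{m+2}_x$. By Theorem~\ref{26-1-2-673}, for every $w\in T_xX$ the contraction $\iota_w\phi:=\phi(w,\cdot,\dots,\cdot)$ lies in $F^{m+1}_x$. In particular $\iota_w\phi\in I(\mathcal{C}_x)_{m+1}$. Hence every partial derivative $\partial\phi/\partial z_i$ vanishes on the affine cone $\widehat{\mathcal{C}}_x$, and by Euler's formula $\phi$ vanishes there too. So $\phi$ lies in the ideal $J=I(\mathcal{C}_x)$, and moreover $\phi$ lies in the \emph{prolongation} of $J_{m+1}$: the space of degree-$(m+2)$ polynomials all of whose first partials lie in $J_{m+1}$.

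The key step, and the main obstacle, is to show that this prolongation vanishes. Write $J_{m+1}=\langle f_1,\dots,f_c\rangle$, with the $f_j$ a regular sequence of degree $m+1$; this uses that $J$ is generated in degree $m+1$ by $F^{m+1}_x$ and that $\mathcal{C}_x$ is a complete intersection of codimension $c$. The condition becomes $\partial_i\phi=\sum_j a_{ij}f_j$ with constants $a_{ij}$. Since the Euler relation gives $(m+2)\phi=\sum_i z_i\partial_i\phi$, we obtain $\phi=\sum_j \ell_j f_j$ with linear forms $\ell_j=\frac{1}{m+2}\sum_i a_{ij}z_i$.

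Differentiating, $\partial_i\phi=\sum_j\bigl((\partial_i\ell_j)f_j+\ell_j\partial_i f_j\bigr)$. Comparing with $\partial_i\phi=\sum_j a_{ij}f_j$ and using $\partial_i\ell_j=a_{ij}/(m+2)$, we get for every $i$
\[
\sum_j \ell_j\,\partial_i f_j\in J_{m+1}.
\]
Now restrict to the cone $\widehat{\mathcal{C}}_x$. At a smooth point the Jacobian matrix $(\partial_i f_j)$ has rank $c$, because $\mathcal{C}_x$ is a smooth complete intersection. The relation then forces $\ell_j|_{\widehat{\mathcal{C}}_x}=0$ for every $j$, since $\sum_j\ell_j\nabla f_j=0$ on the cone and the gradients $\nabla f_j$ are independent there. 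Since $\mathcal{C}_x$ is linearly non-degenerate, each $\ell_j$ is $0$, so $\phi=0$, a contradiction.

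The delicate point is justifying pointwise independence of the gradients on an open dense subset of the cone; smoothness of the complete intersection supplies exactly this. I would also double-check that only the degree-$(m+1)$ piece of $J$ enters, which holds because all the $f_j$ have degree $m+1$. Hence $F^{m+2}_x=0$ and $r=m+1$.
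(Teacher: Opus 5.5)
Your proof is correct and follows essentially the paper's argument. Prolongation puts every $\partial_i\phi$ in $F^{m+1}_x=I(\mathcal{C}_x)_{m+1}$, and a Jacobian-rank argument at smooth points of the cone then kills $\phi$; the paper isolates that step as Lemma~\ref{26-1-2-679}, deducing $\phi\in I(\mathcal{C}_x)^2$ and then using $(I(\mathcal{C}_x)^2)_{m+2}=0$. You inline that lemma and finish by linear non-degeneracy rather than the degree count, and you also spell out why $F^{m+1}_x\neq 0$, which the paper leaves implicit.
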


Before proving this proposition, we use the following classical fact concerning complete intersections.
\begin{lemma}\label{26-1-2-679}
Let $Z\subset \mathbb{P}^n$ be a complete intersection of dimension $n-c$, and let $f\in\mathbb{C}[x_0,\cdots,x_n]$ be a homogeneous polynomial. If for any $i\in \{0,\cdots,n\}$, $\frac{\partial f}{\partial x_i}\in I(Z)$, then $f\in I(Z)^2$.
\end{lemma}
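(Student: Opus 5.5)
The plan is to combine the Euler identity with a Jacobian-rank argument at general points of $Z$. Write $I:=I(Z)=(g_1,\dots,g_c)$, where $g_1,\dots,g_c$ are homogeneous. This is the setting in which the lemma is applied: $Z=\mathcal{C}_x$ is smooth and reduced, $I$ is its radical homogeneous ideal, and the $g_j$ form a regular sequence. Let $d=\deg f$. The case $d=0$ has to be set aside. There the hypothesis holds trivially for any constant, and a constant lies in $I^2$ only if it is $0$, so the statement should be read for $d\ge 1$ (or for $f$ vanishing on $Z$). This is harmless for the application to $F^{m+1}_x$, where $d=m+1\ge 2$.

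\emph{Step 1: $f\in I$.} By Euler's formula, $d\cdot f=\sum_i x_i\,\frac{\partial f}{\partial x_i}$. Each $\frac{\partial f}{\partial x_i}$ lies in $I$, so $f\in I$. Hence we may write $f=\sum_{j=1}^c a_j g_j$ with $a_j$ homogeneous of degree $d-\deg g_j$; this is possible because $I$ is a homogeneous ideal.

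\emph{Step 2: differentiate and reduce modulo $I$.} For each $i$,
\[
\frac{\partial f}{\partial x_i}=\sum_j \frac{\partial a_j}{\partial x_i}\,g_j+\sum_j a_j\,\frac{\partial g_j}{\partial x_i}.
\]
The first sum lies in $I$, and by hypothesis so does the left-hand side. Therefore $\sum_j a_j\,\frac{\partial g_j}{\partial x_i}\in I$ for every $i$. Evaluating at a point $p\in Z$ (more precisely, on its affine cone minus the origin) gives the linear relation
\[
\sum_j a_j(p)\,dg_j(p)=0 .
\]

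\emph{Step 3: Jacobian rank and conclusion.} This is the main point. The polynomials $g_1,\dots,g_c$ cut out $Z$ scheme-theoretically, and $Z$ is reduced of codimension $c$. Hence $Z$ is smooth at a general point of each irreducible component; in our application it is smooth everywhere. By the Jacobian criterion, at such a point $p$ the differentials $dg_1(p),\dots,dg_c(p)$ are linearly independent. The relation from Step 2 then forces $a_j(p)=0$ for all $j$. So each $a_j$ vanishes on a dense open subset of every component of the cone over $Z$, hence on all of it. Since $I$ is radical, this means $a_j\in I$. Consequently $f=\sum_j a_j g_j\in I^2$.

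The only delicate point is Step 3, which needs the generators $g_j$ to define $Z$ with its reduced structure, so that the Jacobian criterion applies. This holds because $Z$ is a complete intersection whose ideal is generated by the $g_j$ and $Z$ is reduced (indeed smooth in our setting). I would spell this out explicitly rather than rely on the word "classical."
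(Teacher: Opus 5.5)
Your proof is correct and is essentially the paper's argument: Euler's formula gives $f\in I(Z)$, differentiating $f=\sum_j a_jg_j$ gives $\sum_j a_j\,\partial g_j/\partial x_i\in I(Z)$, and the rank-$c$ Jacobian at nonsingular points of the cone forces $a_j\in I(Z)$, so $f\in I(Z)^2$. Your degree-zero caveat is correct and harmless: the paper's Euler step needs $\deg f\ge 1$, because for a nonzero constant $f$ the identity $\deg(f)\cdot f=\sum_k x_k\,\partial f/\partial x_k$ says nothing, but the lemma is only applied to $f\in F^{m+2}_x$ of degree $m+2$ (not $F^{m+1}_x$ as you wrote).
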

\begin{proof}
Let $g_1,\cdots,g_c$ be homogeneous polynomials generating $I(Z)$. By Euler's formula for homogeneous polynomials, we have
\[\deg(f)\cdot f=\sum_{k=0}^nx_k\frac{\partial f}{\partial x_k}\]
Thus $f\in I(Z)$. We can express $f$ as
\[f=a_1g_1+\cdots+a_c g_c\]
where $a_1,\cdots,a_c$ are homogeneous polynomials. Taking the partial derivative of both sides with respect to $x_k$ yields
\[\frac{\partial f}{\partial x_k}=\sum_{j=1}^c \frac{\partial a_j}{\partial x_k}g_j+\sum_{j=1}^c a_j\frac{\partial g_j}{\partial x_k}\]
Since $\frac{\partial f}{\partial x_k},g_1,\cdots,g_c\in I(Z)$, we have
\[\sum_{j=1}^c a_j\frac{\partial g_j}{\partial x_k}\in I(Z)\]

Let $w\in \mathbb{C}^{n+1}\backslash \{0\}$ be a preimage of a nonsingular point of $Z$ on the affine cone. Evaluating the above equation at $w$, we obtain
\[ 
\begin{pmatrix} a_1(w) & \cdots & a_c(w) \end{pmatrix} 
\begin{pmatrix} 
\frac{\partial g_1}{\partial x_0}(w) & \cdots & \frac{\partial g_1}{\partial x_n}(w) \\
\vdots & \ddots & \vdots \\
\frac{\partial g_c}{\partial x_0}(w) & \cdots & \frac{\partial g_c}{\partial x_n}(w) 
\end{pmatrix}
= \begin{pmatrix} 0 & \cdots & 0 \end{pmatrix} 
\]
which can be concisely written as the product of a row vector and the Jacobian matrix: $\mathbf{a}(w) \cdot \operatorname{Jac}_g(w) = \mathbf{0}$. Since $[w]\in Z$ is a nonsingular point of $Z$, the rank of $\operatorname{Jac}_g(w)$ is equal to the codimension $c$ of $Z$. Thus, $\mathbf{a}(w)=(a_1(w),\cdots,a_c(w))=\mathbf{0}$. Because $[w]$ is an arbitrary nonsingular point of $Z$, it follows that $a_1,\cdots,a_c\in I(Z)$, and therefore $f=\sum_{j=1}^ca_jg_j\in I(Z)^2$.
\end{proof}

\begin{proof}[Proof of Proposition \ref{26-1-2-678}]
By Corollary \ref{26-1-2-674}, it suffices to prove that $F^{m+2}_x=0$. Take a basis $\{e_1,\cdots,e_n\}$ for the tangent space $W=T_xX$, and let $\{z_1,\cdots,z_n\}\subset W^*$ be its dual basis. For any $f\in F^{m+2}_x\subset \mathrm{Sym}^{m+2}W^*$, as an element of $\mathrm{Sym}^{m+2}W^*$, $f$ corresponds to a homogeneous polynomial on $W$ (denoted by $f(z)$), as well as a symmetric $(m+2)$-multilinear form on $W$, which we denote by $\widetilde{f}:W\times \cdots \times W\to \mathbb{C}$. Their relationship is given by:
\[f(z)=\widetilde{f}(z,\cdots,z),\ \forall z\in W\]
Therefore,
\[\frac{\partial f}{\partial z_i}(z)=\underset{t\to 0}{\lim}\frac{\widetilde{f}(z+te_i,\cdots,z+te_i)-\widetilde{f}(z,\cdots,z)}{t}=(m+2)\widetilde{f}(e_i,z,\cdots,z)\]

By the prolongation property in Theorem~\ref{26-1-2-673}, the map $w_1\odot\cdots\odot w_{m+1}\to \widetilde{f}(e_i,w_1,\cdots,w_{m+1})$ defines an element of $F^{m+1}_x$. Thus,
\[\frac{\partial f}{\partial z_i}\in F^{m+1}_x,\ \forall i\in \{1,\cdots,n\}\]
From the condition, we know $I(\mathcal{C}_x)_{m+1}=F^{m+1}_x$. Thus, for any $i$, $\frac{\partial f}{\partial z_i}\in I(\mathcal{C}_x)$. Consequently, by Lemma \ref{26-1-2-679}, we obtain $f\in I(\mathcal{C}_x)^2$. Since $I(\mathcal{C}_x)$ is generated by a family of polynomials of degree $m+1$, we have $(I(\mathcal{C}_x)^2)_{m+2}=0$, which implies $f=0$.
\end{proof}

\section{Special Birational Transformations}\label{26-1-2-687}
In Section \ref{26-1-2-688}, we obtained the embedding
\[\phi_{\mathbf{F}}:\ T_xX\hookrightarrow \mathbb{P} H^0(X,mD)^*\]
whose image is the open orbit of $X$ under the action of the $n$-dimensional vector group. We can rewrite this embedding in the form of a birational transformation:
\[\Phi_{\mathbf{F}}:\ \mathbb{P}(\mathbb{C}\oplus T_xX)\dashrightarrow X\subset \mathbb{P} H^0(X,mD)^*\]
\[[1:w]\mapsto \phi_{\mathbf{F}}(w)\]
where the rational map $\Phi_{\mathbf{F}}$ is defined by a linear system in $\mathcal{O}_{\mathbb{P}(\mathbb{C}\oplus T_xX)}(r)$ (here $r$ denotes the rank of the system of fundamental forms at $x$), and its inverse $\Phi_{\mathbf{F}}^{-1}$ is defined by a linear system in $\mathcal{O}_{\mathbb{P} H^0(X,mD)^*}(1)$.

\begin{definition}\label{26-1-2-680}
A birational transformation
\[\Phi:\mathbb{P}^n\dashrightarrow Y\subset \mathbb{P}^N\]
(where $Y\subset \mathbb{P}^N$ is a nonsingular projective variety of Picard number 1) is called a special birational transformation of type $(a,b)$ if

1. The base locus $S\subset \mathbb{P}^n$ of $\Phi$ is irreducible and nonsingular.

2. The rational map $\Phi$ is given by a linear system of $\mathcal{O}_{\mathbb{P}^n}(a)$.

3. The rational map $\Phi^{-1}$ is given by a linear system of $\mathcal{O}_{Y}(b)$.
\end{definition}

\begin{proposition}\label{1321}
Let $Y\subset \mathbb{P}(V)$ be a nonsingular linearly non-degenerate Euler-symmetric variety of Picard number 1. Suppose that for a general point $y \in Y$, $\mathbf{F}_y$ has rank $r$ ($r \ge 2$) and order $r-1$. Assume that the scheme-theoretic base locus of $F^{r}_y$ on $\mathbb P(T_yY)$ is irreducible and nonsingular. Then the birational transformation
\[ \Phi_{\mathbf{F}}: \mathbb{P}(\mathbb{C}\oplus T_yY) \dashrightarrow Y \subset \mathbb{P}(V) \]
\[ [1:w] \mapsto \phi_{\mathbf{F}}(w) \]
is a special birational transformation of type $(r,1)$, and the base locus of $\Phi_{\mathbf{F}}$ is $\mathbf{Bs}(F^{r}_y)\subset \mathbb{P}(T_yY)$, which is linearly degenerate.
\end{proposition}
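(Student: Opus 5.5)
By Theorem~\ref{26-1-2-671} and Theorem~\ref{26-1-2-669}, I will first replace $Y\subset\mathbb{P}(V)$ by the projectively equivalent model $M(\mathbf{F}_y)\subset\mathbb{P}(V_{\mathbf{F}})$. In this model $\phi_{\mathbf{F}}(T_yY)$ is the open orbit and $y$ corresponds to $\phi_{\mathbf{F}}(0)$. Everything will then be checked on explicit formulas.

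\textbf{Explicit form of $\Phi_{\mathbf{F}}$.} Let $[t:w]$ be homogeneous coordinates on $\mathbb{P}(\mathbb{C}\oplus T_yY)$. Homogenizing $\phi_{\mathbf{F}}$ to degree $r$ gives
\[
\Phi_{\mathbf{F}}([t:w])=[\,t^r : t^{r-1}w : (\phi_2\mapsto t^{r-2}\phi_2(w,w)) : \cdots : (\phi_r\mapsto \phi_r(w,\dots,w))\,].
\]
So $\Phi_{\mathbf{F}}$ is given by a linear subsystem of $|\mathcal{O}(r)|$, and it genuinely has degree $r$ because $F^r_y\neq 0$.

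\textbf{The inverse.} The inverse is the linear projection $\mathbb{P}(V_{\mathbf{F}})\dashrightarrow\mathbb{P}(\mathbb{C}\oplus T_yY)$, $[t:w:f_2:\cdots:f_r]\mapsto[t:w]$, restricted to $Y$. On the open orbit this projection inverts $\phi_{\mathbf{F}}$, since $\phi_{\mathbf{F}}$ is injective with first two components $(1,w)$. Hence $\Phi_{\mathbf{F}}$ is birational, and $\Phi_{\mathbf{F}}^{-1}$ is defined by the sections $t,w_1,\dots,w_n$ of $\mathcal{O}_Y(1)$. This gives $b=1$. The hypotheses that $Y$ is nonsingular of Picard number one are part of the assumptions.

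\textbf{The base locus.} Setwise, $t\neq0$ gives no base points, because the first coordinate is $t^r$. On $t=0$, all coordinates vanish except the last block, which is $F^r_y(w)$. Hence the set-theoretic base locus is $\{t=0\}\cap\mathbf{Bs}(F^r_y)$, identified with $\mathbf{Bs}(F^r_y)\subset\mathbb{P}(T_yY)$ embedded in the hyperplane $\{t=0\}$. In particular it is linearly degenerate.

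\textbf{Main obstacle: the scheme structure.} The key step is to show that the base ideal
\[
J=(t^r,\ t^{r-1}w_i,\ t^{r-2}F^2_y,\ \dots,\ tF^{r-1}_y,\ F^r_y)
\]
agrees locally with $(t)+F^r_y$ at every base point. I will argue locally at a point $p=[0:w_0]$ of the base locus.
\begin{itemize}
\item Since the order of $\mathbf{F}_y$ is $r-1$, we have $\mathbf{Bs}(F^{r-1}_y)=\emptyset$. So some $\phi\in F^{r-1}_y$ satisfies $\phi(w_0,\dots,w_0)\neq0$.
\item Then $t\cdot\phi(w)$ lies in $J$, and $\phi(w)$ is a unit at $p$. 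Therefore $t\in J_p$.
\item Conversely, every generator of $J$ other than those coming from $F^r_y$ is a multiple of $t$.
\end{itemize}
Hence $J_p=(t)+F^r_y$, so the scheme-theoretic base locus is the scheme $\mathbf{Bs}(F^r_y)$ inside the hyperplane $\{t=0\}$. By hypothesis this scheme is irreducible and nonsingular, which completes conditions 1–3 of Definition~\ref{26-1-2-680} with $(a,b)=(r,1)$.

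The only delicate point is this last saturation argument. It is exactly where the order-$(r-1)$ assumption enters, since it is what kills the non-reduced structure along $t=0$.
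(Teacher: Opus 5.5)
Your proposal is correct and follows the same route as the paper. The paper's proof consists only of the observation that order $r-1$ forces $\mathbf{Bs}(F^{r-1}_y)=\emptyset$, so the base scheme of $\Phi_{\mathbf F}$ is that of $F^r_y$ inside the hyperplane $\{t=0\}$. Your write-up supplies exactly the details that observation leaves implicit: the degree-$r$ homogenization, the inverse as the linear projection $[t:w:\cdots]\mapsto[t:w]$ (which gives $b=1$), and the local argument that some $\phi\in F^{r-1}_y$ with $\phi(w_0)\neq 0$ puts $t$ in $J_p$, yielding the scheme-theoretic identification.
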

\begin{proof}
Since $\mathbf{F}_y$ has order $r-1$, it follows that $\mathbf{Bs}(F^{r-1}_y)=\emptyset$. Therefore, the base locus of the birational transformation $\Phi_{\mathbf{F}}$ is projectively equivalent to the scheme-theoretic base locus of $F^{r}_y$. 
\end{proof}

\begin{proposition}\label{26-1-2-681}
Assume that $\mathcal{C}_x$ is a smooth linearly non-degenerate complete intersection, and $I(\mathcal{C}_x)$ is the ideal generated by $F^{m+1}_x$. Then $\Phi_{\mathbf{F}}$ is a special birational transformation of type $(m+1,1)$, and its base locus is $\mathcal{C}_x\subset \mathbb{P}(\mathbb{C}\oplus T_xX)$.
\end{proposition}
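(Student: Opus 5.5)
We will derive the statement from Proposition~\ref{1321}, applied to $Y=X\subset \mathbb{P} H^0(X,mD)^*$ and $y=x$ a general point of $X^o$. The plan is to check the hypotheses of that proposition one at a time: that $X$ is Euler-symmetric, the rank, the order, and the smoothness and irreducibility of the scheme-theoretic base locus of $F^r_x$.

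\emph{Euler symmetry and Picard number.} $X$ is nonsingular, linearly non-degenerate in $\mathbb{P} H^0(X,mD)^*$ and of Picard number one. By Proposition~\ref{26-1-2-667} it is Euler-symmetric.

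\emph{Rank and order.} Proposition~\ref{26-1-2-678} applies under exactly the present hypotheses and gives rank $r=m+1$. In particular $r\ge 2$. Proposition~\ref{26-1-2-675} gives order $m=r-1$.

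\emph{Base locus.} By assumption, $F^{m+1}_x$ generates $I(\mathcal{C}_x)$, which is the saturated homogeneous ideal of $\mathcal{C}_x$. Hence the scheme-theoretic base locus of $F^{r}_x=F^{m+1}_x$ in $\mathbb{P}(T_xX)$ is the reduced scheme $\mathcal{C}_x$, which is irreducible and nonsingular by the standing assumptions. Proposition~\ref{1321} then says that $\Phi_{\mathbf{F}}$ is special of type $(m+1,1)$, with base locus $\mathbf{Bs}(F^{m+1}_x)=\mathcal{C}_x$. Here $\mathcal{C}_x$ sits in the hyperplane at infinity $\mathbb{P}(T_xX)\subset\mathbb{P}(\mathbb{C}\oplus T_xX)$, which is why it is degenerate as a subvariety of $\mathbb{P}(\mathbb{C}\oplus T_xX)$.

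\emph{Main obstacle.} The delicate point is the identification of the base scheme of $\Phi_{\mathbf{F}}$ with the scheme defined by $F^{m+1}_x$ at infinity. I would argue it directly in homogeneous coordinates $[t:w]$. The components of $\Phi_{\mathbf{F}}$ are
\[
t^{m+1},\quad t^{m}w_i,\quad t^{m+1-k}\phi_k(w)\ \ (\phi_k\in F^k_x,\ 2\le k\le m+1).
\]
Where $t\neq 0$, the component $t^{m+1}$ is nonzero, so there are no base points there. On a chart where $w_i\ne0$, the ideal generated by these sections contains $t^{m+1-k}$ times the ideal of $F^k_x$. Since $\mathbf{Bs}(F^m_x)=\emptyset$, the saturation of the ideal of $F^m_x$ is irrelevant, so locally $t$ lies in the base ideal. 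The base ideal then reduces to $(t)+(F^{m+1}_x)$, whose saturation is $(t)+I(\mathcal{C}_x)$. This yields the scheme-theoretic base locus $\mathcal{C}_x$, which is smooth and irreducible.

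Finally, the inverse is the linear projection onto the coordinates $[t:w]$, so it is given by a subsystem of $\mathcal{O}(1)$. This confirms type $(m+1,1)$.
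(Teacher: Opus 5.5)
Your proposal is correct and follows essentially the same route as the paper. You get order $m$ from Proposition~\ref{26-1-2-675} and rank $m+1$ from Proposition~\ref{26-1-2-678}, then invoke Proposition~\ref{1321}, identifying the base scheme with $\mathcal{C}_x$ because $F^{m+1}_x$ generates the saturated ideal $I(\mathcal{C}_x)$. Your extra chart computation is a welcome addition: using the sections $t\cdot\phi_m$ with $\mathbf{Bs}(F^m_x)=\emptyset$ puts $t$ in the local base ideal, which leaves $(t)+(F^{m+1}_x)$, and this supplies the justification that the paper's one-line proof of Proposition~\ref{1321} leaves implicit.
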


\begin{proof}
By Proposition \ref{26-1-2-675}, the order of $\mathbf{F}_x$ is $m$, and by Proposition \ref{26-1-2-678}, the rank of $\mathbf{F}_x$ is $m+1$. Therefore, it follows from Proposition \ref{1321} that $\Phi_{\mathbf{F}}$ is a special birational transformation of type $(m+1,1)$. Since $I(\mathcal{C}_x)$ is the ideal generated by $F^{m+1}_x$, the scheme-theoretic base locus of $F^{m+1}_x$ is exactly $\mathcal{C}_x$.
\end{proof}

In the preceding proposition, we note that the base locus $\mathcal{C}_x$ of $\Phi_{\mathbf{F}}$ is contained in the hyperplane $\mathbb{P}(T_xX)$. The following proposition shows that a special birational transformation of arbitrary type must be of type $(2,1)$ as soon as its base locus is linearly degenerate.

\begin{proposition}\label{prop:linearly-degenerate-base-locus}
Let
\[
\Phi:\mathbb{P}^n\dashrightarrow Y\subset \mathbb{P}^N
\]
be a special birational transformation of type $(m,r)$, where $m\ge 2$. If the base locus $S\subset \mathbb{P}^n$ of $\Phi$ is linearly degenerate, then $(m,r)=(2,1)$.
\end{proposition}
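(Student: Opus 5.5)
The plan is a divisor-class computation on the blow-up of $\mathbb{P}^n$ along the base locus. Write $(a,b)$ for the type $(m,r)$ in the statement, so that $a\ge 2$ and $b\ge 1$. The goal is to show that the strict transform of a hyperplane containing $S$ can be effective only when $a=2$ and $b=1$.

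\emph{Step 1: resolve $\Phi$.} Let $\sigma:\widetilde X=\mathrm{Bl}_S\mathbb{P}^n\to\mathbb{P}^n$ be the blow-up along the smooth irreducible base locus $S$, with exceptional divisor $E$. Let $H$ also denote the pullback of the hyperplane class. I will use the standard fact from the Ein--Shepherd-Barron setting: since the base scheme is smooth and irreducible, $\Phi$ lifts to a birational morphism $\tau:\widetilde X\to Y$ with $\tau^*\mathcal O_Y(1)=aH-E$. I expect this to be the main point needing justification, namely that the scheme-theoretic base locus is reduced so that $E$ appears with multiplicity exactly one. If necessary I will take this as part of the definition of a special birational transformation, following Ein--Shepherd-Barron and Fu--Hwang.

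\emph{Step 2: the second exceptional divisor.} Since $\rho(\widetilde X)=2$ and $\rho(Y)=1$, the morphism $\tau$ has relative Picard number one. Because $Y$ is smooth, the exceptional locus of $\tau$ is a divisor, and relative Picard number one makes it an irreducible prime divisor $E'$. The inverse $\Phi^{-1}$ is given by a subsystem of $|\mathcal O_Y(b)|$, so
\[
H=b(aH-E)-E' \quad\text{in } \operatorname{Pic}(\widetilde X).
\]
Hence $E'=(ab-1)H-bE$. Both $E$ and $E'$ are prime divisors contracted by birational morphisms, namely $\sigma$ and $\tau$. On a variety of Picard number two they therefore span the two extremal rays of the effective cone: any effective divisor is $\alpha E+\beta E'$ with $\alpha,\beta\ge 0$. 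To check this, intersect with curves contracted by $\sigma$ and by $\tau$. A curve $C$ in a fibre of $\sigma$ satisfies $E\cdot C<0$ and $E'\cdot C\ge 0$, and symmetrically for fibres of $\tau$.

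\emph{Step 3: use the degenerate hyperplane.} Let $H_0$ be a hyperplane containing $S$. Its strict transform is $\sigma^*H_0-kE$ with $k\ge 1$. Since $\sigma^*H_0-kE+(k-1)E=\sigma^*H_0-E$, the class $H-E$ is effective. Write $H-E=\alpha E+\beta E'$. Comparing coefficients of $H$ gives $\beta(ab-1)=1$. Comparing coefficients of $E$ gives $-1=\alpha-b\beta$, so
\[
\alpha=\frac{b}{ab-1}-1.
\]
The condition $\alpha\ge 0$ forces $b\ge ab-1$, that is, $b(a-1)\le 1$. With $a\ge 2$ and $b\ge 1$, this yields $a=2$ and $b=1$, which is the claim $(m,r)=(2,1)$.
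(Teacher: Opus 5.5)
Your argument is correct. It shares the paper's set-up: blow up $S$, use $\tau^*\mathcal{O}_Y(1)=aH-E$, observe that $\tau$ has a unique exceptional prime divisor $E'$, and note that a hyperplane $H_0\supset S$ has strict transform of class $H-E$. The decisive step is organized differently, however.

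\emph{The paper's route.} It never describes the effective cone. It takes any curve $C$ contracted by $\tau$ and gets $E\cdot C=aH\cdot C$ with $H\cdot C>0$, hence $(H-E)\cdot C=(1-a)H\cdot C<0$. So the strict transform of $H_0$ contains every such curve, and since these curves cover $E'$, the strict transform \emph{equals} $E'$. Comparing classes then yields the equalities $ab-1=c=b$, where $c\ge 1$ is the multiplicity of $E'$ in $b\,\tau^*\mathcal{O}_Y(1)-H$.

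\emph{Your route.} You show that the effective cone is spanned by $E$ and $E'$. This needs both families of contracted curves plus a small sign check, which does go through because $ab>1$ (or by excluding $\alpha,\beta\le 0$ with an ample class). You then extract only the inequality $b(a-1)\le 1$, which suffices.

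\emph{Comparison.} The paper's route is shorter, and it gives directly that the hyperplane through $S$ is exactly the divisor contracted by $\Phi$. Yours has the merit of being insensitive to multiplicities. That matters in one spot: writing $H=b(aH-E)-E'$ tacitly sets $c=1$, which the definition of a special birational transformation does not give you. This does no harm, since with $cE'=(ab-1)H-bE$ you still get $\alpha=b/(ab-1)-1$, independent of $c$. You should, however, state it that way. The point you flag in Step~1 is likewise taken as part of the set-up in the paper.
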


\begin{proof}
Let
\[
\pi:W:=\operatorname{Bl}_S(\mathbb{P}^n)\to \mathbb{P}^n,\qquad
\phi:W\to Y
\]
be, respectively, the blow-up along $S$ and the morphism resolving $\Phi$, and let $E$ be the exceptional divisor of $\pi$. Set $H:=\pi^*\mathcal{O}_{\mathbb{P}^n}(1),\ H_Y:=\phi^*\mathcal{O}_Y(1)$. Since $\Phi$ is of type $(m,r)$, we have
\begin{equation}\label{eq:sbt-basic}
H_Y\sim mH-E,\qquad H\sim rH_Y-E_Y,
\end{equation}
where $E_Y$ is the corresponding effective $\phi$-exceptional divisor. Therefore
\begin{equation}\label{eq:EY-class}
E_Y\sim rH_Y-H\sim (mr-1)H-rE.
\end{equation}

By a standard consequence of the negativity lemma, distinct $\phi$-exceptional prime divisors are linearly independent in
$N^1(W/Y)_{\mathbb R}$. Since $\rho(W/Y)=\rho(W)-\rho(Y)=1$, the morphism $\phi$ has at most one exceptional prime divisor. On the other hand, $\rho(W)\ne\rho(Y)$, so $\phi$ is not an isomorphism and therefore has an exceptional prime divisor. Denote this unique exceptional prime divisor by $F$. By \eqref{eq:EY-class}, we have $E_Y\ne0$; hence there exists an integer $b>0$ such that
\begin{equation}\label{eq:EY-bF}
E_Y=bF.
\end{equation}

Since $S$ is linearly degenerate, we can choose a hyperplane
$\Pi\subset\mathbb P^n$ containing $S$. Let $\widetilde{\Pi}\subset W$ be its strict transform. Then
\begin{equation}\label{eq:Pi-class}
\widetilde{\Pi}\sim H-E.
\end{equation}

We prove that $\widetilde{\Pi}=F$. Let $C\subset F$ be a curve contracted by $\phi$.
By \eqref{eq:sbt-basic},
\[
0=H_Y\cdot C=mH\cdot C-E\cdot C,
\]
Thus $E\cdot C=mH\cdot C$. If $H\cdot C=0$, then $C$ is contracted by
$\pi$; but $-E$ is $\pi$-ample, so $E\cdot C<0$, contradicting the equality above.
Therefore $H\cdot C>0$. By \eqref{eq:Pi-class},
\[
\widetilde{\Pi}\cdot C=(H-E)\cdot C=(1-m)H\cdot C<0.
\]
Since $W$ is smooth and $\widetilde{\Pi}$ is an effective divisor, it follows that
$C\subset\widetilde{\Pi}$. Moreover, $F$ is covered by curves contracted by $\phi$, hence
$F\subset\widetilde{\Pi}$. Since both are prime divisors, we obtain $F=\widetilde{\Pi}$.

Thus, by \eqref{eq:EY-bF} and \eqref{eq:Pi-class}, $E_Y=bF=b\widetilde{\Pi}\sim b(H-E)$. Comparing this with \eqref{eq:EY-class}, we obtain
\[
mr-1=b=r.
\]
Thus $r(m-1)=1$. Since $m$ and $r$ are positive integers and $m\ge2$, we must have $r=1,\ m=2$. Therefore $\Phi$ is of type $(2,1)$.
\end{proof}

Fu--Hwang gave a complete classification of special birational transformations of type $(2,1)$:
\begin{theorem}[{\cite[Theorem~1.1]{9}}]\label{26-1-2-683}
Let $\Phi:\mathbb{P}^n\dashrightarrow Y\subset \mathbb{P}^N$ be a special birational transformation of type $(2,1)$. Then its base locus $S^d$ is linearly degenerate, and $S^d\subset \mathbb{P}^{n-1}$ is projectively equivalent to one of the following:

(a) $\mathbb{Q}^d\subset \mathbb{P}^{d+1}$ for $d\ge 1$;

(b) $\mathbb{P}^1\times \mathbb{P}^{d-1}\subset \mathbb{P}^{2d-1}$ for $d\ge 3$;

(c) the 6-dimensional Grassmannian $G(2,5)\subset \mathbb{P}^9$;

(d) the 10-dimensional Spinor variety $\mathbb{S}_5\subset \mathbb{P}^{15}$;

(e) a nonsingular codimension $\le 2$ linear section of $\mathbb{P}^1\times \mathbb{P}^2\subset \mathbb{P}^5$;

(f) a nonsingular codimension $\le 3$ linear section of $G(2,5)\subset \mathbb{P}^9$.
\end{theorem}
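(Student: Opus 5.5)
The plan is to follow the strategy of Fu--Hwang: pass from $\Phi$ to the geometry of lines on $Y$, identify $S$ with the VMRT of $Y$ at a general point, and then use the classification of the resulting quadratic projective manifolds.

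\emph{Step 1: $S$ is linearly degenerate.} Let $\pi:W=\operatorname{Bl}_S\mathbb P^n\to\mathbb P^n$ and $\phi:W\to Y$ be as in the proof of Proposition~\ref{prop:linearly-degenerate-base-locus}, with $H_Y\sim 2H-E$ and $H\sim H_Y-E_Y$. Since $\Phi^{-1}$ is linear, the pull-back of the hyperplane class of $\mathbb P^n$ is a hyperplane section of $Y$. Hence $E_Y$ is the $\phi$-preimage of a hyperplane section of $Y$, and its class is $E_Y\sim H_Y-H\sim H-E$. Since $\rho(W/Y)=1$, the same negativity-lemma argument as before shows that $E_Y=bF$ with $F$ the unique $\phi$-exceptional prime divisor. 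Then $F\sim \tfrac1b(H-E)$, and comparing the coefficients of $H$ and $E$ in $\operatorname{Pic}(W)=\mathbb Z H\oplus\mathbb Z E$ forces $b=1$. So $F\sim H-E$ is effective, which means there is a hyperplane $\Pi$ whose strict transform is $F$, i.e.\ $S\subset\Pi\cong\mathbb P^{n-1}$.

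\emph{Step 2: lines on $Y$ and the VMRT.} Choose a point $p\in\mathbb P^n\setminus\Pi$ and let $y=\Phi(p)$, a general point of $Y$. For a line $\ell$ through $p$ meeting $S$, the quadrics through $S$ restrict to a pencil with a base point on $\ell$, so $\Phi(\ell)$ is a line of $Y$ through $y$. Conversely, pulling back by $\Phi^{-1}$ shows that every line of $Y$ through $y$ arises in this way. Projecting from $p$ onto $\Pi$ then identifies the VMRT $\mathcal C_y\subset\mathbb P(T_yY)$ with $S\subset\Pi$. In particular $Y$ is a prime Fano manifold covered by lines, and it is Euler-symmetric: the $\mathbb C^\times$-action fixing $p$ and $\Pi$ commutes with the quadratic system defining $\Phi$. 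Its system of fundamental forms at $y$ has rank $2$, with $F^2_y$ equal to the quadrics through $S$. Proposition~\ref{1321} and Theorem~\ref{26-1-2-671} then say that $Y=M(\mathbf F_y)$ is determined by $I(S)_2$.

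\emph{Step 3: classification of $S$ (main obstacle).} The fact that $\Phi^{-1}$ is linear gives strong constraints on $S$.
\begin{enumerate}
\item Restricting $\Phi$ to $\Pi$ gives a map $\Pi\dashrightarrow \mathbb P(I(S)_2^*)$ that contracts $\Pi$ onto the image of $F$.
\item Comparing dimensions yields the numerical relations $\dim Y=n$ and $-K_Y=\tfrac{d+2}{?}\dots$, more precisely $i(Y)=\dim S+2$.
\item $S$ is cut out by quadrics.
\item Through a general pair of points of $S$ passes a smooth quadric contained in $S$, coming from the fibres of $\phi|_F$.
\end{enumerate}
Together these make $S$ a smooth \emph{quadro-quadric}/LQEL (conic-connected) manifold whose secant defect equals $\delta=\dim S-\tfrac{n-1-\dim S-1}{1}\cdots$, fixed by the relation $\dim \mathrm{Sec}(S)=n-2$, i.e.\ the secant variety is a hypersurface in $\Pi$. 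I would then invoke Russo's divisibility theorem and the classification of LQEL manifolds with $\operatorname{Sec}$ a hypersurface. These are the Jordan-algebra varieties: quadrics, Segre $\mathbb P^1\times\mathbb P^{d-1}$, $G(2,5)$, $\mathbb S_5$, together with their hyperplane sections that remain of this type. This is exactly the list (a)--(f).

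This last step is the hard one. Verifying that the secant variety has the correct dimension and that the general-pair entry locus is a quadric requires a careful analysis of the fibres of $\phi$ over $\phi(F)$. I would first try to extract the fibres as the spans $\langle p, Q\rangle$ for entry quadrics $Q$. Failing that, I would use the Fano index $i(Y)=\dim S+2$ together with Cartan prolongation (Theorem~\ref{26-1-2-673}) at rank $2$, which forces the symbol algebra to be that of a Hermitian symmetric or Jordan-type structure.
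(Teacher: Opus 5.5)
The paper gives no proof of this statement; it is quoted from Fu--Hwang \cite{9}, so your proposal has to stand on its own. Steps~1 and~2 are sound. Step~1 is the converse of the negativity-lemma argument in Proposition~\ref{prop:linearly-degenerate-base-locus}: $E_Y\sim H-E$ is primitive in $\mathbb ZH\oplus\mathbb ZE$, so $E_Y=F$ is the strict transform of a hyperplane $\Pi\supset S$. (Note that $E_Y$ is the fixed part of the pulled-back linear system, not the preimage of a hyperplane section.) In Step~2, the map on $\mathbb P^n\setminus\Pi$ is $w\mapsto[1:w:q_1(w):\cdots:q_k(w)]$ with the $q_j$ spanning $I(S)_2$, so $F^2_y=I(S)_2$, $F^3_y=0$ and $\mathcal C_y=S$.

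The gap is Step~3. It carries the whole content of the theorem and is not actually carried out: item~2 and the formula for $\delta$ are unfinished, and item~4 (quadric entry loci from the fibres of $\phi|_F$) is only asserted. Worse, the route you then take rests on a false premise. $\operatorname{Sec}(S)$ is \emph{not} a hypersurface in $\Pi$; for (a)--(d) it is all of $\Pi$:
\begin{itemize}
\item In (a), $S=\mathbb Q^d$ is itself a hypersurface of $\Pi=\mathbb P^{d+1}$; this case is the inverse of the projection of $\mathbb Q^n$ from a point.
\item In (b) and (c), $2\times d$ matrices have rank $\le2$ and skew $5\times5$ matrices have rank $\le4$.
\item In (d), $\mathbb P^{15}\setminus\mathbb S_5$ is a single $\operatorname{Spin}_{10}$-orbit.
\end{itemize}
This also fits your own set-up: a secant line $L\subset\Pi$ satisfies $(2H-E)\cdot\widetilde L=0$, so it is contracted by $\phi$ together with $F=\widetilde\Pi$. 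Secant hypersurfaces (Severi varieties, cubic Jordan norms) are the signature of quadro-quadric Cremona transformations, i.e.\ type $(2,2)$. The classification you invoke therefore belongs to a different problem.

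Replacing the premise by $\operatorname{Sec}(S)=\Pi$ does not repair the argument, because no classification of QEL or conic-connected manifolds produces (a)--(f). Russo's divisibility theorem is only a numerical restriction, and the known QEL classifications need the secant defect to be large compared with $\dim S$. By contrast, (b) has $\delta=2$ with $d$ arbitrary, and the proper linear sections in (e) and (f) have $\delta\le3$.

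Your fallback is empty as well. For any linearly independent quadrics $q_j$, the closure of $w\mapsto[1:w:q_1(w):\cdots:q_k(w)]$ is Euler-symmetric with fundamental forms of rank $2$. Cartan's condition (Theorem~\ref{26-1-2-673}) is vacuous in rank $2$, so \emph{rank two plus prolongation} forces nothing.

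What cuts the possibilities down to the list is global geometry: $Y$ is smooth of Picard number one, and $S$ is the smooth scheme-theoretic base locus. In Fu--Hwang this is turned into strong symmetry, via the natural $\mathbb C^\times$-actions on $Y$ and their results on prolongations, before any classification is applied. Your Step~3 supplies no substitute for that input, so as written the proposal does not prove the theorem.
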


\begin{corollary}\label{26-1-2-684}
Let $\Phi:\mathbb{P}^n\dashrightarrow Y\subset \mathbb{P}^N$ be a special birational transformation of type $(m,r)$ with $m\ge 2$. If its base locus $S$ is a linearly degenerate complete intersection, then $S\subset \mathbb{P}^{n-1}$ is projectively isomorphic to a hyperquadric.
\end{corollary}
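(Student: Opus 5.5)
The plan is to chain Proposition~\ref{prop:linearly-degenerate-base-locus} with the Fu--Hwang classification, Theorem~\ref{26-1-2-683}. Since $\Phi$ has type $(m,r)$ with $m\ge 2$ and its base locus $S$ is linearly degenerate, Proposition~\ref{prop:linearly-degenerate-base-locus} gives $(m,r)=(2,1)$. Theorem~\ref{26-1-2-683} then places $S$, viewed inside a hyperplane $\mathbb{P}^{n-1}$, on the list (a)--(f). Note that the linear span of $S$ may be smaller than $\mathbb{P}^{n-1}$; the theorem's list already records the embedding in the span. The task therefore reduces to a purely projective question: which varieties on the list (a)--(f) are complete intersections?

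Each entry is handled as follows.

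\textbf{Case (a).} $\mathbb{Q}^d\subset\mathbb{P}^{d+1}$ is a hyperquadric, which is the desired conclusion.

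\textbf{Cases (b), (c), (d).} These are $\mathbb{P}^1\times\mathbb{P}^{d-1}$ with $d\ge3$, $G(2,5)$ and $\mathbb{S}_5$, each in its minimal embedding. Each is cut out by quadrics, is not a hypersurface, and has codimension $c\ge 2$. If such an $S$ were a complete intersection, then, since it is non-degenerate in its span and its ideal is generated in degree $2$, it would be a complete intersection of $c$ quadrics. Its degree would then be $2^c$. This is compared with the known degrees:
\begin{itemize}
\item $\mathbb{P}^1\times\mathbb{P}^{d-1}$ has degree $d$ and codimension $d-1$, and $d\ne 2^{d-1}$ for $d\ge3$;
\item $G(2,5)$ has degree $5$ and codimension $3$, and $5\ne 8$;
\item $\mathbb{S}_5$ has degree $12$ and codimension $5$, and $12\ne 32$.
\end{itemize}
Alternatively, one can count quadrics: a complete intersection of $c$ quadrics has exactly $c$ independent quadrics in its ideal, whereas these varieties have more (for example $\binom{d}{2}$, $5$ and $10$ respectively).

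\textbf{Cases (e), (f).} These are smooth linear sections. If they had dimension $\ge 3$, the Lefschetz hyperplane theorem together with the complete-intersection property would force Picard number $1$. However, $\mathbb{P}^1\times\mathbb{P}^2$ has $\rho=2$, and its linear sections of dimension $\ge 2$ keep $\rho=2$. The degree-and-quadric-count argument is more uniform, so I would use it throughout: linear sections preserve degree and the number of quadrics in the ideal, while codimension in the span is also preserved. This gives:
\begin{itemize}
\item for sections of $\mathbb{P}^1\times\mathbb{P}^2$: degree $3$, codimension $2$, and $3$ quadrics versus the required $2$;
\item for sections of $G(2,5)$: degree $5$, codimension $3$, and $5$ quadrics versus $3$.
\end{itemize}
For the low-dimensional cases (curves, or even finite sets of points) I must make sure the ideal is still generated by the restricted quadrics and that the count of independent quadrics is unchanged. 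This holds because these varieties are arithmetically Cohen--Macaulay, so hyperplane sections by regular linear forms preserve the Hilbert function shifts. The degree argument is cleaner here: a complete intersection of codimension $c$ generated by quadrics must have degree $2^c$, but $3\ne 4$ and $5\ne 8$.

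The main obstacle is making rigorous the claim that a complete intersection whose ideal is generated by quadrics is a complete intersection of quadrics, and doing so for degenerate low-dimensional cases such as the zero-dimensional sections in (e). My first approach is the following. If $S$ is a complete intersection with generators of degrees $m_1\le\cdots\le m_c$, then the dimension of $I(S)_2$ equals the number of $m_i$ equal to $2$. This number is at most $c$, whereas each of the listed varieties other than the quadric has strictly more than $c$ independent quadrics. For the listed varieties, $\dim I(S)_2$ is computed from their known Hilbert functions, which are invariant under the regular linear sections used in (e) and (f).
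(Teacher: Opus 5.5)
Your proposal is correct and follows the paper's own route: Proposition~\ref{prop:linearly-degenerate-base-locus} forces type $(2,1)$, and Theorem~\ref{26-1-2-683} finishes. You also spell out a step the paper leaves implicit, namely that entries (b)--(f) are not complete intersections, and that check is correct.

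The check can be shortened. A complete intersection of codimension $c$ in its linear span has all generator degrees $\ge 2$, hence degree $\ge 2^c$. Entries (b)--(f) have degrees $d<2^{d-1}$, $5<8$, $12<32$, $3<4$ and $5<8$, so you need neither quadric generation nor the ACM property. Also, no zero-dimensional sections occur, since the sections in (e) are at least curves.
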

\begin{proof}
By Proposition~\ref{prop:linearly-degenerate-base-locus}, the transformation must be of type $(2,1)$. The assertion then follows from the classification in Theorem~\ref{26-1-2-683}.
\end{proof}

\begin{corollary}\label{26-1-2-685}
Suppose that for a general $x\in X$, $\mathcal{C}_x$ is a smooth linearly non-degenerate complete intersection, and $I(\mathcal{C}_x)$ is the ideal generated by $F^{m+1}_x$. Then $X\cong \mathbb{Q}^n$.
\end{corollary}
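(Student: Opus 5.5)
Combining Proposition~\ref{26-1-2-681} with Corollary~\ref{26-1-2-684}, we conclude that $\mathcal{C}_x$ is a hyperquadric, and then Theorem~\ref{1233} gives $X\cong\mathbb{Q}^n$.

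First, Proposition~\ref{26-1-2-681} applies directly to our hypotheses. The map
\[
\Phi_{\mathbf{F}}:\mathbb{P}(\mathbb{C}\oplus T_xX)\dashrightarrow X\subset \mathbb{P} H^0(X,mD)^*
\]
is a special birational transformation of type $(m+1,1)$, and its base locus is $\mathcal{C}_x$. The target satisfies the requirements of Definition~\ref{26-1-2-680}: it is nonsingular of Picard number one, and it is linearly non-degenerate in $\mathbb{P}H^0(X,mD)^*$ because the embedding is given by the complete linear system $|mD|$. By construction, the base locus lies in the hyperplane at infinity $\mathbb{P}(T_xX)\subset\mathbb{P}(\mathbb{C}\oplus T_xX)$, so it is linearly degenerate. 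Since $m\ge1$, the first entry of the type is $m+1\ge2$.

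Second, the hypothesis of Corollary~\ref{26-1-2-684} holds. The base locus is a complete intersection inside $\mathbb{P}(T_xX)$, and the corollary's phrase "$S\subset\mathbb{P}^{n-1}$" is read in exactly this hyperplane. The corollary therefore shows that $\mathcal{C}_x\subset\mathbb{P}(T_xX)$ is projectively isomorphic to a hyperquadric. As a consistency check, Proposition~\ref{prop:linearly-degenerate-base-locus} forces $(m+1,1)=(2,1)$, so $m=1$ and $D$ is very ample. Combining this with Proposition~\ref{26-1-2-664}, $X$ is a prime Fano manifold.

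Third, a linearly non-degenerate hyperquadric in $\mathbb{P}(T_xX)=\mathbb{P}^{n-1}$ is a smooth hypersurface; smoothness is part of the standing assumptions. Hwang's Theorem~\ref{1233} then gives $X\cong\mathbb{Q}^n$, using $n\ge3$. Alternatively, one could invoke the classification of quadratically symmetric prime Fano manifolds, since the rank is $m+1=2$.

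The only delicate point is the meaning of "hyperquadric" in Corollary~\ref{26-1-2-684}. Among the cases of Theorem~\ref{26-1-2-683}, the complete intersections must be matched against case (a), where $\mathbb{Q}^d\subset\mathbb{P}^{d+1}$ spans only a $\mathbb{P}^{d+1}$. Linear non-degeneracy of $\mathcal{C}_x$ in $\mathbb{P}(T_xX)$ forces $d+1=n-1$, so $\mathcal{C}_x$ is a hypersurface. This is exactly what Theorem~\ref{1233} requires.
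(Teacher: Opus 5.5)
Your proposal is correct and follows the paper's proof: Proposition~\ref{26-1-2-681} together with Corollary~\ref{26-1-2-684} makes $\mathcal{C}_x$ a hyperquadric, and Hwang's Theorem~\ref{1233} then gives $X\cong\mathbb{Q}^n$. Your additional checks are all accurate and only make explicit what the paper leaves implicit. These are that $m+1\ge 2$, that the base locus is degenerate because it lies in $\mathbb{P}(T_xX)$, and that linear non-degeneracy forces the quadric to be a hypersurface in $\mathbb{P}(T_xX)$.
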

\begin{proof}
By Proposition \ref{26-1-2-681} and Corollary \ref{26-1-2-684}, for a general $x\in X$, $\mathcal{C}_x\subset \mathbb{P}(T_xX)$ is a hyperquadric. Thus, by Theorem \ref{1233}, $X\cong \mathbb{Q}^n$.
\end{proof}

\section{Proof of the Main Theorem}\label{26-1-2-686}

In this section, we synthesize the local differential geometric properties and the classification of special birational transformations established in Sections~\ref{26-1-2-689}--\ref{26-1-2-687} to complete the proof of the Main Theorem.
\begin{proof}
Suppose condition (a) holds. Then by Proposition \ref{26-1-2-677} and Corollary \ref{26-1-2-685}, $X\cong \mathbb{Q}^n$.

Assume that condition (b) holds. By Proposition~\ref{26-1-2-664}, for any minimal rational curve $l$ belonging to $\mathcal{K}$, we have $-K_X\cdot l=i(X)$. Hence
\[
i(X)=\dim(\mathcal{C}_x)+2\ge n-2.
\]
By \cite[Theorem~1.1]{14}, $X$ belongs to the classification list of Fano manifolds of Picard number $1$ that admit an equivariant compactification structure of $\mathbb{C}^n$ and have index at least $n-2$:
\[
\mathbb P^n,\ \mathbb Q^n,\ G(2,5),\ G(2,6),\ \mathbb S_5,\ \mathrm{Lag}(6),
\]
together with smooth linear sections of $G(2,5)$ of codimension $1$ or $2$, and $\mathbb P^4$-general linear sections of $\mathbb S_5$ of codimension $1$, $2$, or $3$.

We first exclude the linear sections of $\mathbb S_5$ of codimension $1$, $2$, or $3$ appearing in the classification table. They can be excluded by the constraint on the dimension of the automorphism group. Indeed, the dimensions of the automorphism groups of these candidates are, respectively, $30$; $17$ or $30$; and $6,7,9$ or $11$ (see
\cite[Introduction]{16};
\cite[Section~3]{17};
\cite[Section~4]{17}). On the other hand, by Proposition~\ref{310}, if these candidates satisfied the assumptions of this paper, then one would have $\dim(\mathrm{Aut}(X))=\dim(X)+1$. In the three codimension cases above, the right-hand side is respectively $10,9,8$, contradicting the known dimensions of the automorphism groups. Therefore these linear sections of $\mathbb S_5$ cannot occur.

We next consider the remaining candidates. First, $\mathbb P^n$ is excluded by the assumption $\mathcal{C}_x\subsetneq \mathbb P(T_xX)$. From the known descriptions of the VMRT at a general point of these homogeneous models and of the linear sections of $G(2,5)$, it follows that $G(2,5)$, $G(2,6)$, $\mathbb S_5$, $\mathrm{Lag}(6)$, and the smooth linear sections of $G(2,5)$ of codimension $1$ or $2$ do not satisfy the assumption required in this paper that the VMRT at a general point is a smooth linearly non-degenerate complete intersection. Therefore the only remaining possibility is $X\cong \mathbb Q^n$.
\end{proof}

\paragraph{Acknowledgements.} I am grateful to Qifeng Li for helpful discussions.

\end{document}